\newtheorem{theorem}{Theorem}[section]
\newtheorem{lemma}{Lemma}[subsection]
\newtheorem{proposition}{Proposition}[subsection]
\newtheorem{example}{Example}[subsection]
\numberwithin{equation}{section}
\begin{document}

\title{Quasi-particle Bases of Principal Subspaces of the Affine Lie Algebra of Type \texorpdfstring{$G_\MakeLowercase{2}^{(1)}$}{G_2^{(1)}} }

\author{Marijana Butorac}

\address{University of Rijeka, Department of Mathematics, Radmile Matej\v{c}i\'{c} 2, 51000 Rijeka, Croatia}

\email{mbutorac@math.uniri.hr}

\subjclass[2000]{Primary 17B67; Secondary 17B69, 05A19}

\keywords{affine Lie algebras, vertex operator algebras, principal subspaces, quasi-particle bases}
\thanks{This work has been supported in part by the Croatian Science Foundation under the project 2634., by the Croatian Scientific Centre of Excellence Quantix Lie and by University of Rijeka research grant 13.14.1.2.02.}

\begin{abstract} 
The aim of this work is to construct the quasi-particle basis of principal subspace of standard module of highest weight $k\Lambda_0$ of level $k\geq 1$ of affine Lie algebra of type $G_2^{(1)}$ by means of which we obtain the basis of principal subspace of generalized Verma module.\end{abstract}

\maketitle

\section*{INTRODUCTION}
Principal subspaces of standard modules of affine Lie algebras $A_1^{(1)}$ were first introduced by B. L. Feigin and A. V. Stoyanovsky in \cite{FS}. Motivated by the work of J. Lepowsky and M. Primc (\cite{LP}), Feigin and Stoyanovsky related characters of principal subspaces with Rogers-Ramanujan type identities. This connection was further studied by many authors, in particular in \cite{AKS}, \cite{Cal1}--\cite{Cal2}, \cite{CalLM1}--\cite{CalLM4}, \cite{CalMP}, \cite{CLM1}--\cite{CLM2}, \cite{G}, \cite{KP}, \cite{PS1}--\cite{PS2},  \cite{S1}--\cite{S2} and others. More recently, Slaven Ko\v zi\' c in \cite{Ko1}--\cite{Ko2} showed that cha\-racter formulas for level $1$ principal subspaces associated with the integrable highest weight module of quantum affine algebra $U_q(\widehat{\mathfrak{sl}_{2}})$ coincide with
the character formulas found in \cite{FS}. 

In \cite{G}, G. Georgiev constructed bases for principal subspaces of certain standard $A_l^{(1)}$-modules by using monomials of certain vertex operator coefficients corresponding to simple roots of $A_l$, the so-called quasi-particles (cf. \cite{FS}), from which were easily obtained the Rogers-Ramanujan type character formulas. In \cite{Bu1} and \cite{Bu2} we extended Georgiev's construction of quasi-particle bases for principal subspaces of standard module $L(k\Lambda_0)$ and generalized Verma module $N(k\Lambda_0)$ of highest weight $k\Lambda_0$, $k \in \mathbb{N}$ for affine Lie algebras of type $B_l^{(1)}$ and $C_l^{(1)}$, $l \geq 2$. As a consequence we proved two new series of Rogers-Ramanujan type identities obtained from the characters of principal subspaces of generalized Verma module.

In this note we construct quasi-particle bases of principal subspaces of generalized Verma module $N(k\Lambda_0)$ and its irreducible quotient in the case of affine Lie algebra of type $G_2^{(1)}$. Two main steps in the construction are similar to the case of $B_2^{(1)}$. First step is to find relations among quasi-particles from which follow the spanning set of principal subspaces and the second step is to prove that the spanning set is linearly independent by induction on the linear order on quasi-particles. The main differences with the case of $B_2^{(1)}$ are relations which describe the interaction of quasi-particles associated to different simple roots and operators which we use in the proof of linear independence, since we don't have a simple current operator as in the proof of independence for $B_2^{(1)}$. 

To state our main results, denote by $W_{L(k\Lambda_0)}$ the principal subspace of level $k$ standard module and by $\text{ch} \ W_{L(k\Lambda_{0})}$ the character of $W_{L(k\Lambda_0)}$ and by $W_{N(k\Lambda_0)}$ the principal subspace of generalized Verma module $N(k\Lambda_0)$. Our result states:
\begin{theorem}
\begin{equation*}
\mathrm{ch} \  W_{L(k\Lambda_{0})}
= \sum_{\substack{r^{(1)}_{1}\geq \ldots \geq r^{(k)}_{1}\geq 0\\ r^{(1)}_{2}\geq \ldots \geq r^{(3k)}_{2}\geq 0}}
\frac{q^{\sum_{s=1}^k r^{(s)^{2}}_{1}+\sum_{s=1}^{3k}r^{(s)^{2}}_{2}-\sum_{s=1}^k r^{(s)}_{1}(r^{(3s)}_{2}+r^{(3s-1)}_{2}+r^{(3s-2)}_{2})}}{(q)_{r^{(1)}_{1}-r^{(2)}_{1}}\cdots (q)_{r^{(k)}_{1}}(q)_{r^{(1)}_{2}-r^{(2)}_{2}}\cdots (q)_{r^{(3k)}_{2}}}y^{r_1}_{1}y^{r_2}_{2},
\end{equation*}
where $r_1=\sum_{s=1}^{k}r_1^{(s)}$ and $r_2=\sum_{s=1}^{3k}r_2^{(s)}$.
\end{theorem}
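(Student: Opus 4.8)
The plan is to derive the character formula as a corollary of an explicit monomial (quasi-particle) basis of $W_{L(k\Lambda_0)}$, following the strategy of Georgiev and of the author's treatment of $B_2^{(1)}$. Write $\mathfrak{g}$ for the simple Lie algebra of type $G_2$ with simple roots $\alpha_1,\alpha_2$, and let $x_{\alpha_i}(z)=\sum_m x_{\alpha_i}(m)z^{-m-1}$ be the vertex operator attached to the root vector $x_{\alpha_i}$. For a positive integer $n$ I consider the coefficients of the normally ordered powers $x_{\alpha_i}(z)^n=\sum_m x_{n\alpha_i}(m)z^{-m-n}$; these $x_{n\alpha_i}(m)$ are the \emph{quasi-particles} of color $i$, charge $n$ and energy $-m$. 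Since $W_{L(k\Lambda_0)}$ is generated from the highest weight vector $v_{k\Lambda_0}$ by the operators $x_{\alpha_i}(m)$, it is spanned by quasi-particle monomials $b\,v_{k\Lambda_0}$, with $b$ an ordered product of such coefficients. The first task is to cut this spanning set down to one indexed exactly by the data in the sum, and the second is to establish its linear independence.

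For the spanning step I would establish three families of relations. (i) Same-color relations: from the operator product expansion of $x_{\alpha_i}(z)$ with itself one obtains, as in the $A_1^{(1)}$ case, ordering and difference conditions on the energies of equal-charge quasi-particles of a fixed color, together with the quadratic self-interaction $\sum_s (r_i^{(s)})^2$ in the exponent. (ii) Vanishing (integrability) relations: the level-$k$ condition imposes vanishing of sufficiently high powers $x_{\alpha_i}(z)^{N_i}$, which bound the charges and, reflecting the ratio $3$ of squared root lengths in $G_2$, leave exactly $k$ admissible charges $r_1^{(1)}\geq\cdots\geq r_1^{(k)}$ for color $1$ and $3k$ admissible charges $r_2^{(1)}\geq\cdots\geq r_2^{(3k)}$ for color $2$. (iii) Mixed-color relations: the pairing $\langle\alpha_1,\alpha_2\rangle$ produces relations allowing a color-$1$ quasi-particle of charge $r_1^{(s)}$ to be moved past color-$2$ quasi-particles at the cost of the cross term $-\sum_s r_1^{(s)}(r_2^{(3s)}+r_2^{(3s-1)}+r_2^{(3s-2)})$; the grouping of the $3k$ color-$2$ charges into $k$ blocks of three is precisely the combinatorial shadow of this length ratio. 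Combining (i)--(iii) reduces the spanning set to the monomials whose charge-type is recorded by the admissible sequences $(r_i^{(s)})$ in the theorem.

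The main obstacle is linear independence of this reduced set, and here $G_2$ departs from $B_2$: there is no simple current at my disposal. I would instead argue by induction on the linear order on quasi-particle monomials (ordered first by total charge, then by charge-type, then by energy), peeling off quasi-particles one at a time using two kinds of operators: the coefficients of suitable intertwining operators between principal subspaces of adjacent standard modules, together with weight-grading and lattice translation operators that substitute for the missing simple current. A nontrivial relation among the monomials of minimal total charge in a fixed weight space would, after applying these operators and projecting onto the leading term, reduce to a relation among fewer quasi-particles or inside a principal subspace already known to be free, namely that of the generalized Verma module $W_{N(k\Lambda_0)}$, forcing all coefficients to vanish. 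This is the step where the $G_2$-specific mixed relations must be shown to be \emph{complete}, i.e.\ that no further relations survive.

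Finally, the character is read off from the basis. For a fixed admissible charge-type the minimal energy of the corresponding monomials equals the displayed quadratic form, while the remaining freedom---choosing energies above the minimum subject only to the difference conditions---is a sum over partitions counted by the factors $1/((q)_{r_1^{(1)}-r_1^{(2)}}\cdots(q)_{r_1^{(k)}}(q)_{r_2^{(1)}-r_2^{(2)}}\cdots(q)_{r_2^{(3k)}})$. Weighting each monomial by $q^{\text{energy}}y_1^{r_1}y_2^{r_2}$ and summing first over energies and then over charge-types yields exactly the stated expression, completing the proof.
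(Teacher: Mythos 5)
Your spanning step and your final character computation do coincide with the paper's: the integrability relations $x_{(k+1)\alpha_1}(z)=0$, $x_{(3k+1)\alpha_2}(z)=0$ (relations (\ref{eq:6}), (\ref{eq:7})), the same-color difference conditions (\ref{eq:11})--(\ref{eq:12}), and the two-color relation (\ref{eq:13}) with exponent $\mathrm{min}\{3n_1,n_2\}$ are exactly the three families you describe; they yield Proposition \ref{pro:1}, and summing $q^{\mathrm{energy}}$ over a fixed dual-charge-type produces the quadratic form and the factors $(q)^{-1}_{r^{(s)}_i-r^{(s+1)}_i}$ exactly as in Section \ref{sec:9}.

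The genuine gap is in your linear independence argument: you propose to reduce a hypothetical dependence to one ``inside a principal subspace already known to be free, namely that of the generalized Verma module $W_{N(k\Lambda_0)}$.'' This reduction runs in the impossible direction. $L(k\Lambda_0)$ is the quotient of $N(k\Lambda_0)$ by its maximal proper submodule $J$, so a relation $\sum_a c_a b_a v_{L(k\Lambda_0)}=0$ only tells you that $\sum_a c_a b_a v_{N(k\Lambda_0)}$ lies in $J$; the freeness of $W_{N(k\Lambda_0)}$ recorded in the PBW basis (\ref{eq:5}) therefore produces no contradiction. Concretely, $x_{\alpha_1}(-1)^{k+1}v_{N(k\Lambda_0)}\neq 0$ while its image in $L(k\Lambda_0)$ vanishes: passing to the quotient destroys independence wholesale. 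This is precisely why the paper's logical order is the reverse of yours --- the basis of $W_{N(k\Lambda_0)}$ (Theorem \ref{thm:2}) is deduced from the basis of $W_{L(k\Lambda_0)}$ (Theorem \ref{thm:1}), not used as an input for it. What the paper actually does is: (1) realize $W_{L(k\Lambda_0)}\subset L(\Lambda_0)^{\otimes k}$ and apply the projection $\pi_{\mathfrak{R}}$ attached to the dual-charge-type of the smallest monomial with nonzero coefficient, which annihilates all monomials of larger charge-type; (2) apply the intertwining operator coefficient $A_\theta=x_\theta(-1)$ on a tensor factor and commute the Weyl translation operators $e_\theta$, $e_{\alpha_1}$ to the left, removing color-$1$ quasi-particles one at a time --- here Proposition \ref{pro:2} is essential to guarantee that the shifted monomial $b'$ of (\ref{eq:24}) still lies in $B_{W_{L(k\Lambda_0)}}$, so the induction on charge-type can proceed; and (3) once only color-$2$ quasi-particles remain, view the projected vectors inside the tensor product of principal subspaces $W_{L^A(3\Lambda_0)}$ of level-$3$ standard $\widetilde{sl}_2(\alpha_2)$-modules, and conclude by Georgiev's $A_1^{(1)}$ argument. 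Your ``intertwining plus translation operators'' instinct matches step (2), but without the projection $\pi_{\mathfrak{R}}$, without the verification in Proposition \ref{pro:2}, and above all with a terminal case ($W_{N(k\Lambda_0)}$) that cannot close the induction, your argument does not go through; the valid terminal case is the known level-$3$ result for $A_1^{(1)}$.
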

This new fermionic formula which follows directly from quasi-particle basis of $W_{L(k\Lambda_{0})}$ is related to the study of parafermionic Rogers-Ramanujan type characters \cite{Ge}.

We use quasi-particle bases of $W_{L(k\Lambda_{0})}$ in the construction of quasi-partic\-les bases of principal subspace $W_{N(k\Lambda_{0})}$ of generalized Verma module, from which follows a generalization of Euler-Cauchy identity  
\begin{theorem}\label{tm1}
$$\prod_{m > 0} \frac{1}{(1-q^my_1)}\frac{1}{(1-q^my_2)}\frac{1}{(1-q^my_1y_2)}\frac{1}{(1-q^my_1y_2^2)}\frac{1}{(1-q^my_1y_2^3)}\frac{1}{(1-q^my_1^2y_2^3)}$$
\begin{equation}\label{eq:1}= \sum_{\substack{r^{(1)}_{1}\geq r^{(2)}_{1}\geq r^{(3)}_{1} \geq \ldots  \geq  0\\ r^{(1)}_{2}\geq r^{(2)}_{2} \geq r^{(3)}_{2}\geq \ldots  \geq 0}}
\frac{q^{\sum_{s\geq 1}  r^{(s)^{2}}_{1}+\sum_{s\geq 1} r^{(s)^{2}}_{2}-\sum_{s\geq 1} r^{(s)}_{1}(r^{(3s)}_{2}+r^{(3s-1)}_{2}+r^{(3s-2)}_{2})}}{(q)_{r^{(1)}_{1}-r^{(2)}_{1}}(q)_{r^{(2)}_{1}-r^{(3)}_{1}}\cdots  (q)_{r^{(1)}_{2}-r^{(2)}_{2}}(q)_{r^{(2)}_{2}-r^{(3)}_{2}}\cdots }y^{r_1}_{1}y^{r_2}_{2},
\end{equation}
where $r_1=\sum_{s\geq 1} r_1^{(s)}$ and $r_2=\sum_{s\geq 1} r_2^{(s)}$.
The sum on the right side of (\ref{eq:1}) is over all descending infinite sequences of non-negative integers with finite support. \end{theorem}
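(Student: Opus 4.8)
The plan is to compute the graded character of the principal subspace $W_{N(k\Lambda_0)}$ of the generalized Verma module in two independent ways and to match the outcomes; because the generalized Verma module is freely generated over the negative part of the affinization, this character will not depend on $k$, which explains why $k$ is absent from \eqref{eq:1}. First I would record that, since $N(k\Lambda_0)$ is free as a module over $U(\mathfrak{g}\otimes t^{-1}\mathbb{C}[t^{-1}])$ and both the positive modes and the zero mode of every positive root vector annihilate the highest weight vector, one has $W_{N(k\Lambda_0)}\cong U(\mathfrak{n}_+\otimes t^{-1}\mathbb{C}[t^{-1}])$ as a graded vector space. The Poincaré--Birkhoff--Witt theorem then writes the character as a product over the PBW generators $x_\beta(-m)$, $\beta\in\Delta_+$, $m\ge 1$, where $x_\beta(-m)$ has internal degree $m$ and weight $\beta$. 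Listing the six positive roots of $G_2$, namely $\alpha_1,\alpha_2,\alpha_1+\alpha_2,\alpha_1+2\alpha_2,\alpha_1+3\alpha_2,2\alpha_1+3\alpha_2$, and encoding $\beta=a\alpha_1+b\alpha_2$ by $y_1^{a}y_2^{b}$, this product is exactly the left-hand side of \eqref{eq:1}.

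The second computation uses the quasi-particle basis of $W_{N(k\Lambda_0)}$. Its elements are monomials in quasi-particles of two colours, one for each simple root, and the bookkeeping is by charges. I would organize the colour-$1$ charges into a weakly decreasing sequence $r_1^{(1)}\ge r_1^{(2)}\ge\cdots\ge 0$ of finite support, and likewise $r_2^{(1)}\ge r_2^{(2)}\ge\cdots\ge 0$ for colour $2$; the total weights $r_1=\sum_s r_1^{(s)}$, $r_2=\sum_s r_2^{(s)}$ give the factor $y_1^{r_1}y_2^{r_2}$. The minimal internal degree of a monomial with prescribed charges is the quadratic expression $\sum_s r_1^{(s)2}+\sum_s r_2^{(s)2}-\sum_s r_1^{(s)}(r_2^{(3s)}+r_2^{(3s-1)}+r_2^{(3s-2)})$, in which the self-energy terms arise from collisions of equal-colour quasi-particles while the mixed term encodes the colour interaction, the coefficient $3$ being the ratio of squared root lengths in $G_2$. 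Since the number of quasi-particles of colour $1$ and charge exactly $s$ equals $r_1^{(s)}-r_1^{(s+1)}$, the freedom to raise their energies above the minimum contributes $(q)^{-1}_{r_1^{(s)}-r_1^{(s+1)}}$, and similarly for colour $2$; summing over all admissible configurations yields precisely the right-hand side of \eqref{eq:1}.

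Comparing the two expressions proves the identity. Equivalently, one may obtain \eqref{eq:1} by letting $k\to\infty$ in the Theorem of the Introduction: in each fixed weight the level-$k$ relations only constrain charges exceeding $k$, so the coefficients of $\mathrm{ch}\,W_{L(k\Lambda_0)}$ stabilize to those of the free object, namely the product; on the summation side the terminal factor $(q)_{r_1^{(k)}}$ merges into the telescoping product $\prod_s(q)_{r_1^{(s)}-r_1^{(s+1)}}$ once one sets $r^{(s)}=0$ for large $s$, recovering the infinite-sequence sum.

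The step I expect to be the main obstacle is the linear independence of the quasi-particle monomials in $W_{N(k\Lambda_0)}$, on which the right-hand side rests. Unlike the $B_2^{(1)}$ case there is no simple current operator, so the induction on the linear order of quasi-particles must be driven by other intertwining-type operators; keeping control of the mixed-colour interaction, the $G_2$-specific cross term with coefficient $3$, is what makes both the spanning relations and the independence argument delicate. Once the basis is in hand, the two character computations themselves are routine.
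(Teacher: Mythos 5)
Your proposal is correct and follows essentially the same route as the paper: the author likewise equates the character of $W_{N(k\Lambda_0)}$ computed from the Poincar\'e--Birkhoff--Witt basis (\ref{eq:5}) (giving the product side) with the character obtained from the quasi-particle basis of Theorem \ref{thm:2} (giving the sum side), with the linear independence of that basis deferred to an argument parallel to the $W_{L(k\Lambda_0)}$ case. Your identification of this independence step as the real content, and of the cross term with coefficient $3$ as the $G_2$-specific difficulty, matches the paper's own emphasis.
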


\section{Principal subspaces}
\label{sec:1}
Let $\mathfrak{g}$ be a complex simple Lie algebra of type $G_2$ with a triangular decompositi\-on $\mathfrak{g} =\mathfrak{n}_{-}\oplus \mathfrak{h}\oplus \mathfrak{n}_{+}$, with the basis 
$\Pi=\{\alpha_1=\frac{1}{\sqrt{3}}(-2\epsilon_1+\epsilon_2+\epsilon_3) , \alpha_2={\frac{1}{\sqrt{3}}(\epsilon_1-\epsilon_2)} \}$ of the root system $R$ and the co\-rresponding set of fundamental weights $\left\{\omega_1=2\alpha_1+3\alpha_2\right.$, $\left.\omega_2=\alpha_1+2\alpha_2\right\}$, where $\epsilon_1,\epsilon_2,\epsilon_3$ are vectors of the standard basis of $\mathbb{R}^3$. Denote by $\theta=\frac{1}{\sqrt{3}}(-\epsilon_1-\epsilon_2+2\epsilon_3) $ the highest root and assume that all long roots $\alpha \in R$ are normalized by the condition $\langle \alpha,\alpha \rangle =2$, where $\langle \cdot,\cdot \rangle$ denotes the invariant nondegenarate bilinear form on $\mathfrak{g}$, which induces a bilinear form on $\mathfrak{h}^*$. Denote by $Q$ the root lattice and by $P$ the weight lattice of $\mathfrak{g}$. Then, $P=Q$. For later use we fix root vectors  
\begin{align}\label{eq:2}
x_{\alpha_1+\alpha_2}=\left[x_{\alpha_2},x_{\alpha_1} \right], \ x_{\alpha_1+2 \alpha_2}=\left[x_{\alpha_2},x_{\alpha_1+\alpha_2} \right],\\
\nonumber
x_{\alpha_1+3 \alpha_2}=\left[x_{\alpha_2},x_{\alpha_1+2\alpha_2} \right], \ x_{2\alpha_1+3\alpha_2}=\left[x_{\alpha_1},x_{\alpha_1+3\alpha_2} \right].
\end{align}

Let $\widetilde{\mathfrak{g}}$ be the associated affine Lie algebra
$$\widetilde{ \mathfrak{g} }=\widehat{\mathfrak{g}} \oplus \mathbb{C}d,
$$
$$\widehat{\mathfrak{g}}= \mathfrak{g}\otimes \mathbb{C}\left[t,t^{-1}\right] \oplus \mathbb{C}c,
$$
with commutation relations
\begin{equation}\label{eq:3}
\left[x(j_1),y(j_2)\right]= \left[x, y\right](j_1+j_2) + \left\langle x, y \right\rangle j_1 \delta_{j_1+j_2,0}c,
\end{equation}
$$\left[c,\widetilde{ \mathfrak{g} }\right]= 0, \ \ \left[d,x(j)\right]= jx(j),
$$
where $x(j) = x \otimes t^j$ for $x, y \in \mathfrak{g}, \ j,j_1,j_2 \in \mathbb{Z}$, (cf. \cite{K}). 
We consider $\widetilde{\mathfrak{g}}$-subalgebras
$$ \mathcal{L}(\mathfrak{n}_{+})=\mathfrak{n}_{+} \otimes \mathbb{C}[t,t^{-1}],
$$
\begin{align*}    
\mathcal{L}(\mathfrak{n}_{+})_{\geq 0}= \mathfrak{n}_{+}  \otimes \mathbb{C}[t], \ \ \mathcal{L}(\mathfrak{n}_{+})_{< 0}= \mathfrak{n}_{+}  \otimes t^{-1}\mathbb{C}[t^{-1}]
\end{align*}
and
\begin{equation*}
 \mathcal{L}(\mathfrak{n}_{\alpha})=\mathfrak{n}_{\alpha} \otimes \mathbb{C}[t,t^{-1}],
\end{equation*}
where 
$$ \mathfrak{n}_{\alpha}=\mathbb{C}x_{\alpha}
$$
are one-dimensional $\mathfrak{g}$-subalgebras generated with root vectors $x_{\alpha}$, $\alpha \in R$. 

We extend our form $\langle \cdot,\cdot \rangle$ to $\widetilde{\mathfrak{h}}= \mathfrak{h} \oplus  \mathbb{C}c \oplus \mathbb{C}d$. The set of simple roots of $\widetilde{\mathfrak{g}}$ is $\left\{\alpha_0, \alpha_1, \alpha_2\right\}$ and $\left\{\Lambda_0, \Lambda_1, \Lambda_2\right\}$ is the set of fundamental weights. Denote by $L(\Lambda_0)$ a standard (i.e. integrable highest weight) $\widetilde{\mathfrak{g}}$-module of level 1 with the highest weight vector $v_{L(\Lambda_0)}$.

Fix $k \in \mathbb{N}$. Denote by $N(k\Lambda_0)$ the generalized Verma module and by $L(k\Lambda_0)$ its irreducible quotient. The induced $\widetilde{\mathfrak{g}}$-module $N(k\Lambda_0)$ is defined as
$$
N(k\Lambda_{0})= 
U(\widehat{\mathfrak{g}})\otimes_{U(\widehat{\mathfrak{g}}_{\geq 0})} \mathbb{C}v_{N(k\Lambda_{0})},
$$
where $\widehat{\mathfrak{g}}_{\geq 0}=\bigoplus_{n\geq0} \mathfrak{g}\otimes t^{n}\oplus \mathbb{C}c$ 
and $\mathbb{C}v_{N(k\Lambda_{0})}$ is 1-dimensional $\widehat{\mathfrak{g}}_{\geq 0}$-module, such that 
$$
cv_{N(k\Lambda_{0})}=kv_{N(k\Lambda_{0})}, \ \ dv_{N(k\Lambda_{0})}=0, \ \ (\mathfrak{g}\otimes t^{j})v_{N(k\Lambda_{0})}=0, \ \ j \geq 0.
$$
Set 
$$
v_{N(k\Lambda_{0})}=1 \otimes v_{N(k\Lambda_{0})}.
$$

The generalized Verma module has a structure of a vertex operator algebra, as its irreducible quotient $L(k\Lambda_0)$ and all the level k standard modules are modules for vertex operator algebra $L(k\Lambda_0)$. The vertex operator map is determined by
$$
Y(x(-1)v_{N(k\Lambda_{0})}, z)=\sum_{m \in \mathbb{Z}}x(m)z^{-m-1}=x(z)
$$
for $x \in \mathfrak{g}$ (cf. \cite{LL}).
We will use the commutator formula among vertex operators: 
\begin{align}\label{eq:4} 
[Y(x_{\alpha}(-1)v_{N(k\Lambda_{0})},z_1), Y(x_{\beta}(-1)^rv_{N(k\Lambda_{0})},z_2)]\\
\nonumber
= \sum_{j \geq 0} \frac{(-1)^j}{j!} \left(\frac{d}{dz_1} 
 \right)^j z^{-1}_2
\delta\left(\frac{z_1}{z_2}\right)Y(x_{\alpha}(j)x_{\beta}(-1)^rv_{N(k\Lambda_{0})},z_2),
\end{align}
where $\alpha, \beta \in R$, (cf. \cite{FHL}).

Denote by $v_{L(k\Lambda_{0})}$ the highest weight vector of $L(k\Lambda_0)$. We define a principal subspace $W_{L(k\Lambda_0)}$ of $L(k\Lambda_0)$ (see \cite{FS}, \cite{G}) as
$$
W_{L(k\Lambda_0)}= U(\mathcal{L}(\mathfrak{n}_+))v_{L(k\Lambda_0)}
$$
and the principal subspace $W_{N(k\Lambda_{0})}$ of the generalized Verma module $N(k\Lambda_{0})$ as
$$
W_{N(k\Lambda_{0})}= U(\mathcal{L}(\mathfrak{n}_{+}))v_{N(k\Lambda_{0})}.
$$

Note that the map 
\begin{align*}    
f: U(\mathcal{L}(\mathfrak{n}_{+})_{< 0}) \rightarrow W_{N(k\Lambda_{0})},\\
f(b)=bv_{N(k\Lambda_{0})}
\end{align*}    
is an isomorphism of $\mathcal{L}(\mathfrak{n}_{+})_{< 0}$-modules. If we order basis elements of $\mathfrak{n}_{+}$
$$
\left\{x_{\alpha_1}, x_{\alpha_2}, x_{\alpha_1+\alpha_2},x_{\alpha_1+2\alpha_2},x_{\alpha_1+3\alpha_2},x_{2\alpha_1+3\alpha_2}\right\}
$$  
in the following way:
$$
x_{\alpha_2} < x_{\alpha_1} <x_{\alpha_1+\alpha_2}< x_{\alpha_1+2\alpha_2}<x_{\alpha_1+3\alpha_2}<x_{2\alpha_1+3\alpha_2}
$$
and basis elements of $\mathcal{L}(\mathfrak{n}_{+})_{< 0}$ 
$$\left\{x_{\alpha}(m): \alpha \in R_+,  m<0\right\}$$ 
as:
$$
x(m) \leq y(m') \ \ \Leftrightarrow \ \ x<y \ \ \text{or} \ \ x=y \ \ \text{and} \ \ m <m',
$$
then from the Poincar\'{e}-Birkhoff-Witt theorem follows that 
vectors 
\begin{equation}\label{eq:5}
x_{\alpha_2}(m^1_1)\cdots  x_{\alpha_2}(m^{s_1}_1)x_{\alpha_1}(m^1_2)\cdots x_{\alpha_1}(m^{s_2}_2)
\end{equation}
$$ \cdots x_{2\alpha_1+3\alpha_2}(m^{1}_6)\cdots   x_{2\alpha_1+3\alpha_2}(m^{s_{6}}_{6})v_{N(k\Lambda_{0})},$$
where $m_i^1\leq \cdots \leq m_i^{s_i}<0$, $s_i \geq 0$, $1 \leq i \leq 6$, form a basis of a vector space ${W_{N(k\Lambda_{0})}}$. 

In next sections, we construct bases of principal subspaces $W_{L(k\Lambda_{0})}$ and $W_{N(k\Lambda_{0})}$ in terms of certain coefficients of vertex operators corresponding to vectors $x_{\alpha_i}(-1)^rv_{L(k\Lambda_0)}$ (and $x_{\alpha_i}(-1)^rv_{N(k\Lambda_0)}$), where $r\geq 1$ and $\alpha_i \in \Pi$. 

First, we choose a special subspace of $U(\mathcal{L}(\mathfrak{n}_+))$
$$
U=U(\mathcal{L}(\mathfrak{n}_{\alpha_2}))U(\mathcal{L}(\mathfrak{n}_{\alpha_1})).
$$
It is easy to see that principal subspaces are generated by operators in $U$ acting on the highest weight vectors $v_{L(k\Lambda_0)}$ and $v_{N(k\Lambda_0)}$ (see Lemma 3.1 in \cite{G}).

\section{Quasi-particle bases of principal subspaces}
\label{sec:2}
We start this section with introducing all necessary notions and facts needed in the construction of quasi-particle bases of principal subspaces. Some terms and labels which we use, but are not mentioned, are the same as in our previous work, therefore, for more details we refer to \cite{Bu1}--\cite{Bu2} and also to \cite{G}. 

\subsection{Quasi-particle monomials}
\label{sec:3}
For given $i \in \left\{1,2\right\}$, $r \in \mathbb{N}$ and $m\in \mathbb{Z}$ define a quasi-particle 
of color $i$, charge $r$ and energy $-m$ by
\begin{equation*} 
x_{r\alpha_{i}}(m)=\textup{Res}_z \left\{ z^{m+r-1}x_{r\alpha_{i}}(z)\right\},
\end{equation*}
where $x_{r\alpha_{i}}(z)$ is a vertex operator
$$
 x_{r\alpha_{i}}(z):= x_{\alpha_i}(z)^r=Y(\left(x_{\alpha_i}(-1)\right)^rv_{L(k\Lambda_{0})},z).
$$
$x_{r\alpha_{i}}(z)$ is the generating function of quasi-particles of color $i$ and charge $r$. 

Denote by $b(\alpha_i)$ the monochromatic quasi-particle monomial, that is the product of quasi-particles of the same color $i$. We say that monomial $b$ ``colored'' with more co\-lors is a polychromatic monomial. As in the case of $B_2^{(1)}$, our basis monomials will be ``colored'' with two colors $i=1,2$ and our monomials will have the form
$$
b= b(\alpha_{2})b(\alpha_{1}).
$$
For monomial
$$
b(\alpha_{2})b(\alpha_{1})=x_{n_{r_{2}^{(1)},2}\alpha_{2}}(m_{r_{2}^{(1)},2}) \cdots  x_{n_{1,2}\alpha_{2}}(m_{1,2}) 
x_{n_{r_{1}^{(1)},1}\alpha_{1}}(m_{r_{1}^{(1)},1}) \cdots  x_{n_{1,1}\alpha_{1}}(m_{1,1}),
$$ 
we will say it is of charge-type 
$$
\mathcal{R}'=\left(n_{r_{2}^{(1)},2}, \ldots ,n_{1,2};n_{r_{1}^{(1)},1}, \ldots ,n_{1,1}\right),
$$
where
$$
0 \leq n_{r_{i}^{(1)},i}\leq \ldots \leq  n_{1,i},
$$
dual-charge-type
$$
\mathcal{R}= \left(r^{(1)}_{2},\ldots , r^{(s_{2})}_{2};r^{(1)}_{1},\ldots , r^{(s_{1})}_{1} \right),$$
where
$$
r^{(1)}_{i}\geq r^{(2)}_{i}\geq \ldots \geq  r^{(s_{i})}_{i}\geq 0 
$$
and color-type
$$ \left(r_{2},r_{1}\right),$$
where 
$$
r_i=\sum_{p=1}^{r_{i}^{(1)}}n_{p,i}=\sum^{s_{i}}_{t=1}r^{(t)}_{i} \ \ \text{and} \ \ s_{i}\in \mathbb{N},
$$
(cf. \cite{Bu1}--\cite{Bu2} and \cite{G}) if for every color $\mathcal{R}$ and $\mathcal{R}'$ are mutually conjugate partitions of $r_i$ (cf. \cite{A1}). We use the same terminology for the products of generating functions. 

We assume that all monomial factors are sorted so that energies of quasi-particles of the same color and the same charge form an increasing sequence of integers from right to left. We compare charge-type $\mathcal{R}'$ and $\overline{\mathcal{R}'}$, where  $\overline{\mathcal{R}'}=\left(\overline{n}_{\overline{r}_{2}^{(1)},2}, \ldots ,\overline{n}_{1,1}\right)$, so that we compare their charges from right to left, i.e. we write $\mathcal{R}'<\overline{\mathcal{R}'}$ if there is $u \in \mathbb{N}$, such that $n_{1,i}=\overline{n}_{1,i}, n_{2,i}=\overline{n}_{2,i},\ldots , n_{u-1,i}=\overline{n}_{u-1,i},$ and 
$u=\overline{r}_{i}^{(1)}+1$ or $n_{u,i}<\overline{n}_{u,i}$. 

We compare two monomials $b$ and $\overline{b}$ by comparing first their charge-types $\mathcal{R}'$ and $\overline{\mathcal{R}'}$ and then their sequences of energies $\left(m_{r_{2}^{(1)},2},\ldots , m_{1,1}\right)$ and $\left(\overline{m}_{\overline{r}_{2}^{(1)},2}, \ldots
,\overline{m}_{1,1}\right)$ (in a similar way as charge-types, again starting from color $i=1$): 
$$
b<\overline{b} \ \ \text{if} \ \ \left\{\begin{array}{ccc} \mathcal{R}'<\overline{\mathcal{R}'},&& \\\mathcal{R}'=\overline{\mathcal{R}'}& \text{and} &\left(m_{r_{2}^{(1)},2},\ldots , m_{1,1}\right)
<
\left(\overline{m}_{\overline{r}_{2}^{(1)},2}, \ldots
,\overline{m}_{1,1}\right).   \end{array}\right. 
$$

\subsection{Relations among quasi-particles}
\label{sec:4}
On a standard module $L(k\Lambda_0)$, we have vertex operator algebra relations
\begin{equation}\label{eq:6}
x_{(k+1)\alpha_1}(z)=0,
\end{equation}
\begin{equation}\label{eq:7}
x_{(3k+1)\alpha_2}(z)=0,
\end{equation}
\begin{equation}\label{eq:9}
x_{n\alpha_i}(z)v_{L(k\Lambda_0)} \in W_{L(k\Lambda_0)}\left[\left[z\right]\right],
\end{equation}
and 
\begin{equation}\label{eq:10} 
x_{n\alpha_i}(m)v_{L(k\Lambda_0)} = 0, \ \ \text{for} \ \ m > -n,
\end{equation}
when $n \leq k$ for $i=1$ and $n \leq 3k$ for $i=2$, 
(see \cite{LL}, \cite{MP}).

In reducing the set $Uv_{L(k\Lambda_0)}$ to the spanning set we use relations for a sequence of monochromatic monomial vectors (see Lemma 2.2.1 in \cite{Bu1}, or \cite{JP}, \cite{G}, \cite{F}) 
$$
x_{n\alpha_i}(m)x_{n'\alpha_i}(m')v_{L(k\Lambda_0)},x_{n\alpha_i}(m-1)x_{n'\alpha_i}(m'+1)v_{L(k\Lambda_0)}, \ldots $$
$$ \ldots  , x_{n\alpha_i}(m-2n+1)x_{n'\alpha_i}(m'+2n-1)v_{L(k\Lambda_0)},
$$
colored with color $i$ and with charge-type $(n,n')$, where $n< n'$, which we express as a (finite) linear combination of monomial vectors 
\begin{equation}\label{eq:11} 
x_{n\alpha_i}(j)x_{n'\alpha_i}(j')v_{L(k\Lambda_0)} \ \ \text{such that} \ \ j \leq m- 2n \ \ \text{and} \ \ j'\geq m'+2n
\end{equation}
and monomial vectors with a factor quasi-particle $x_{(n'+1)\alpha_i}(j_1)$, $j_1 \in \mathbb{Z}$.   

In the case when $n = n'$ monomials 
$$
x_{n\alpha_i}(m)x_{n\alpha_i}(m') \ \  \text{with} \ \ m'-2n< m \leq   m'
$$
can be expressed as a linear combination of monomials 
\begin{equation}\label{eq:12} 
x_{n\alpha_i}(j)x_{n\alpha_i}(j') \ \ \text{with} \ \ j\leq j'-2n\end{equation} 
and monomials with quasi-particle $x_{(n+1)\alpha_i}(j_1)$, $j_1 \in \mathbb{Z}$ (see Corollary 2.2.2 in \cite{Bu1}, or \cite{JP}, \cite{G}, \cite{F}).

Next, we consider products of quasi-particles colored with different colors. First, from commutation formulas (\ref{eq:2}) and (\ref{eq:3}) and induction on $n, n' \in \mathbb{N}$ follows 
\begin{lemma}\label{lem:1}  
Let $n\leq 3k, \ n'\leq k$ be fixed. We have:
\begin{itemize}
	\item [a)]  
$x_{\alpha_1}(0)x^{n}_{\alpha_2}(-1)v_{L(k\Lambda_0)}=-nx^{n-1}_{\alpha_2}(-1)x_{\alpha_1+\alpha_2}(-1)v_{L(k\Lambda_0)}+$ 
$$+\binom{n}{2} x^{n-2}_{\alpha_2}(-1)x_{\alpha_1+2\alpha_2}(-2)v_{L(k\Lambda_0)}-\binom{n}{3}x^{n-3}_{\alpha_2}(-1)x_{\alpha_1+3\alpha_2}(-3)v_{L(k\Lambda_0)};$$
	\item [b)] $x_{\alpha_1}(1)x^{n}_{\alpha_2}(-1)v_{L(k\Lambda_0)}=\binom{n}{2}x^{n-2}_{\alpha_2}(-1)x_{\alpha_1+2\alpha_2}(-1)v_{L(k\Lambda_0)}-$
$$-\binom{n}{3}x^{n-3}_{\alpha_2}(-1)x_{\alpha_1+3\alpha_2}(-2)v_{L(k\Lambda_0)};$$
	\item [c)] 
$x_{\alpha_1}(2)x^{n}_{\alpha_2}(-1)v_{L(k\Lambda_0)}=-\binom{n}{3}x^{n-3}_{\alpha_2}(-1)x_{\alpha_1+3\alpha_2}(-1)v_{L(k\Lambda_0)}$;
	\item [d)] 
$x_{\alpha_1}(j)x^{n}_{\alpha_2}(-1)v_{L(k\Lambda_0)}=0$, where $j \geq 3$;
	\item [e)] 
$x_{\alpha_2}(0)x^{n'}_{\alpha_1}(-1)v_{L(k\Lambda_0)}=n'x^{n'-1}_{\alpha_1}x_{\alpha_1+\alpha_2}(-1)v_{L(k\Lambda_0)}$;
	\item [f)] 
$x_{\alpha_2}(j)x^{n'}_{\alpha_1}(-1)v_{L(k\Lambda_0)}=0$, where $j \geq 1$.
	\end{itemize}
$$\ \ \ \ \ \ \ \ \ \ \ \ \ \ \ \ \ \ \ \ \ \ \ \ \ \ \ \ \ \ \ \ \ \ \ \ \ \ \ \ \ \ \ \ \ \ \ \ \ \ \ \ \ \ \ \ \ \ \ \ \ \ \ \ \ \ \ \ \ \ \ \ \ \ \ \ \ \ \ \ \ \ \ \ \ \ \ \ \ \ \ \ \ \ \ \ \ \ \ \ \ \ \ \ \ \ \ \ \square$$
\end{lemma}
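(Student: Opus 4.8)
The plan is to prove all six identities at once by reducing each to a single iterated-commutator computation, organized (as the statement suggests) as an induction on $n$, respectively $n'$. The key starting point is the highest-weight property $x_{\alpha_1}(j)v_{L(k\Lambda_0)}=0$ for $j\geq 0$, which turns the left-hand sides into pure commutators: $x_{\alpha_1}(j)x_{\alpha_2}(-1)^nv_{L(k\Lambda_0)}=[x_{\alpha_1}(j),x_{\alpha_2}(-1)^n]v_{L(k\Lambda_0)}$, and likewise for the other color. I would then invoke the elementary identity
\begin{equation*}
AB^n=\sum_{p\geq 0}\binom{n}{p}B^{n-p}C_p,\qquad C_0=A,\quad C_p=[C_{p-1},B],
\end{equation*}
with $A=x_{\alpha_1}(j)$ and $B=x_{\alpha_2}(-1)$; this identity is precisely what the induction on $n$ yields, so everything reduces to computing the tower $(C_p)$ and applying it to $v_{L(k\Lambda_0)}$.

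For parts (a)--(d) I would compute the $C_p$ from the affine relations (\ref{eq:3}) together with the normalization of root vectors in (\ref{eq:2}), obtaining
\begin{equation*}
C_1=-x_{\alpha_1+\alpha_2}(j-1),\quad C_2=x_{\alpha_1+2\alpha_2}(j-2),\quad C_3=-x_{\alpha_1+3\alpha_2}(j-3),
\end{equation*}
and $C_p=0$ for $p\geq 4$. The chain terminates exactly because $\alpha_1+4\alpha_2$ is not a root of $G_2$, so that $[x_{\alpha_2},x_{\alpha_1+3\alpha_2}]=0$; at every step the central term of (\ref{eq:3}) drops out, since none of the pairs of roots involved sum to zero. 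As the $p=0$ term dies on the vacuum, this already gives $x_{\alpha_1}(j)x_{\alpha_2}(-1)^nv_{L(k\Lambda_0)}=\sum_{p=1}^{3}\binom{n}{p}x_{\alpha_2}(-1)^{n-p}C_pv_{L(k\Lambda_0)}$.

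To finish I would read off the four cases using the highest-weight property $x_{\beta}(m)v_{L(k\Lambda_0)}=0$ for every positive root $\beta$ and $m\geq 0$ applied to $C_pv_{L(k\Lambda_0)}=\pm x_{\alpha_1+p\alpha_2}(j-p)v_{L(k\Lambda_0)}$: taking $j=0,1,2$ successively annihilates more of the three surviving terms and produces the stated right-hand sides with their binomial coefficients, while $j\geq 3$ makes every energy $j-p$ nonnegative and gives (d). Parts (e) and (f) are the same argument with $A=x_{\alpha_2}(j)$, $B=x_{\alpha_1}(-1)$, where the tower is far shorter: $C_1=x_{\alpha_1+\alpha_2}(j-1)$ and $C_p=0$ for $p\geq 2$, because $2\alpha_1+\alpha_2$ is not a root. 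Only the $p=0,1$ terms survive, and specializing to $j=0$ versus $j\geq 1$ yields (e) and (f) respectively.

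The computation is essentially bookkeeping, so I expect the only genuine (and mild) obstacle to lie in two places: confirming that each successive bracket is controlled by the $G_2$ root strings and therefore terminates at the stated length, and tracking the signs and structure constants, all of which are forced to be $\pm 1$ by the choice of root vectors in (\ref{eq:2}). Checking that every central contribution in (\ref{eq:3}) vanishes is immediate once one verifies that no pair of roots occurring in the towers sums to zero.
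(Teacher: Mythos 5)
Your proposal is correct and takes essentially the same route as the paper: the paper disposes of this lemma precisely by ``induction on $n, n'$'' using the fixed root vectors (\ref{eq:2}) and the affine commutation relation (\ref{eq:3}), which is exactly what your iterated-commutator identity $AB^n=\sum_{p\geq 0}\binom{n}{p}B^{n-p}C_p$ packages, with the towers terminating because $\alpha_1+4\alpha_2$ and $2\alpha_1+\alpha_2$ are not roots of $G_2$. Your sign bookkeeping (structure constants equal to $1$ by the defining brackets in (\ref{eq:2}), signs from antisymmetry, no central terms since no two roots involved sum to zero) checks out in all six cases.
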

Using the previous lemma follows relation among quasi-particles of different colors: 
\begin{lemma}
Let $n_{1}\leq k, \ n_2\leq 3k$. One has
\begin{equation}\label{eq:13} 
(z_{1}-z_{2})^{\text{min}\left\{3n_1,n_2\right\}}x_{n_1\alpha_{1}}(z_{1})x_{n_{2}\alpha_{2}}(z_{2})=(z_{1}-z_{2})^{\text{min}\left\{3n_1,n_2\right\}}x_{n_{2}\alpha_{2}}(z_{2})x_{n_1\alpha_{1}}(z_{1}).\end{equation}
\end{lemma}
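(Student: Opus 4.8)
The plan is to read (\ref{eq:13}) as the assertion that the quasi-particle fields $x_{n_1\alpha_1}(z_1)=Y(x_{\alpha_1}(-1)^{n_1}v,z_1)$ and $x_{n_2\alpha_2}(z_2)=Y(x_{\alpha_2}(-1)^{n_2}v,z_2)$ are mutually local with locality bound $\min\{3n_1,n_2\}$, and to establish this through the commutator formula (\ref{eq:4}). Applying that formula in its general form, with the quasi-particle $x_{\alpha_1}(-1)^{n_1}v$ in the first slot, writes the commutator $[x_{n_1\alpha_1}(z_1),x_{n_2\alpha_2}(z_2)]$ as a finite sum $\sum_{j\ge 0}\frac{(-1)^j}{j!}\partial_{z_1}^{j}\bigl(z_2^{-1}\delta(z_1/z_2)\bigr)\,Y(c_j,z_2)$ with coefficient vectors $c_j=x_{n_1\alpha_1}(j+1-n_1)\,x_{\alpha_2}(-1)^{n_2}v$; note that every bracket arising here pairs distinct, non-opposite positive roots, so no central term ever contributes. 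Since $(z_1-z_2)^{j+1}\partial_{z_1}^{j}\bigl(z_2^{-1}\delta(z_1/z_2)\bigr)=0$, the factor $(z_1-z_2)^{\min\{3n_1,n_2\}}$ automatically annihilates every term with $j<\min\{3n_1,n_2\}$, and the lemma reduces to the single vanishing statement $c_j=0$ for all $j\ge\min\{3n_1,n_2\}$, i.e. $x_{n_1\alpha_1}(m)\,x_{\alpha_2}(-1)^{n_2}v=0$ once $m$ is large. Only this vanishing is needed; the lemma does not claim the exponent is minimal.

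The second and main step is to prove this vanishing. The base case $n_1=1$ is immediate from Lemma~\ref{lem:1}: parts (a)--(c) give $c_0,c_1,c_2$ explicitly (nonzero only when $n_2\ge 1,2,3$ respectively), and part (d) gives $c_j=x_{\alpha_1}(j)x_{\alpha_2}^{n_2}(-1)v=0$ for $j\ge 3$, so $c_j=0$ for $j\ge\min\{3,n_2\}=\min\{3\cdot 1,n_2\}$. For the inductive step I would factor $x_{n_1\alpha_1}(z_1)=x_{\alpha_1}(z_1)x_{(n_1-1)\alpha_1}(z_1)$, legitimate as a product of generating functions since quasi-particles of the same color commute, expand $x_{n_1\alpha_1}(m)$ as a sum of products $x_{\alpha_1}(a)x_{(n_1-1)\alpha_1}(b)$, apply the induction hypothesis to the inner factor $x_{(n_1-1)\alpha_1}(b)\,x_{\alpha_2}^{n_2}(-1)v$, and then let the outer $x_{\alpha_1}(a)$ act on the surviving vectors by the same commutator bookkeeping used to prove Lemma~\ref{lem:1}.

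Conceptually the bound $\min\{3n_1,n_2\}$ is governed by the $\alpha_2$-string through $\alpha_1$: the only roots of the form $\alpha_1+q\alpha_2$ are those with $0\le q\le 3$, so each long-root quantum can absorb short-root quanta at most three times, via $x_{\alpha_1}\to x_{\alpha_1+\alpha_2}\to x_{\alpha_1+2\alpha_2}\to x_{\alpha_1+3\alpha_2}$, the process terminating because $\alpha_1+4\alpha_2$ is not a root. This caps the obtainable energy at $3n_1$ absorptions, while the total short-root charge caps it at $n_2$, producing the exponent $\min\{3n_1,n_2\}$. The cap is most transparent if one computes instead the (symmetric, by locality) order of the pole from the $\alpha_2$-side, using parts (e)--(f) of Lemma~\ref{lem:1}, where $x_{\alpha_2}$ merely climbs the string $\{x_{\alpha_1+q\alpha_2}\}$ without ever leaving it.

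The hardest part will be the inductive bookkeeping. Once the outer $x_{\alpha_1}(a)$ is applied it no longer acts only on copies of $x_{\alpha_2}(-1)$ but on the raised factors $x_{\alpha_1+q\alpha_2}$ created at earlier stages, and for $q=3$ the bracket $[x_{\alpha_1},x_{\alpha_1+3\alpha_2}]$ produces the highest root vector $x_{2\alpha_1+3\alpha_2}$ (cf. (\ref{eq:2})). Tracking these extra contributions together with their energy shifts, and confirming that the combined delta-function order never exceeds $\min\{3n_1,n_2\}$, is precisely where the full set of relations among all six positive root vectors is used, and where the argument departs from the $B_2^{(1)}$ case.
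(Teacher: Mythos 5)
Your reduction is sound as far as it goes: by the general form of the commutator formula (\ref{eq:4}), relation (\ref{eq:13}) is indeed equivalent to the vanishing $c_j=x_{n_1\alpha_{1}}(j+1-n_1)x_{\alpha_2}(-1)^{n_2}v_{L(k\Lambda_0)}=0$ for all $j\geq\min\{3n_1,n_2\}$, and your base case $n_1=1$ is exactly Lemma \ref{lem:1} a)--d). The genuine gap is the inductive step, which you only sketch, and whose proposed mechanism is false: after applying the induction hypothesis to the inner factor $x_{(n_1-1)\alpha_1}(b)$, the surviving terms are \emph{not} individually annihilated by the outer $x_{\alpha_1}(a)$; the vanishing of $c_j$ for composite charge happens only through cancellation among nonzero terms, with coefficients tied to the structure constants among the non-simple positive root vectors. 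Concretely, take $n_1=2$, $n_2=3$ (so $k\geq2$), $j=3$, and $w=x_{\alpha_2}(-1)^{3}v_{L(k\Lambda_0)}$. Mode expansion plus Lemma \ref{lem:1} d) leaves
\begin{equation*}
c_3=x_{2\alpha_1}(2)w=2\,x_{\alpha_1}(0)x_{\alpha_1}(2)w+x_{\alpha_1}(1)x_{\alpha_1}(1)w .
\end{equation*}
By Lemma \ref{lem:1} c) and (\ref{eq:2}), $x_{\alpha_1}(0)x_{\alpha_1}(2)w=-x_{\alpha_1}(0)x_{\alpha_1+3\alpha_2}(-1)v_{L(k\Lambda_0)}=-x_{2\alpha_1+3\alpha_2}(-1)v_{L(k\Lambda_0)}\neq0$, while Lemma \ref{lem:1} b), (\ref{eq:2}) and the Jacobi identity (which forces $[x_{\alpha_1+\alpha_2},x_{\alpha_1+2\alpha_2}]=-x_{2\alpha_1+3\alpha_2}$) give $x_{\alpha_1}(1)x_{\alpha_1}(1)w=2\,x_{2\alpha_1+3\alpha_2}(-1)v_{L(k\Lambda_0)}\neq0$. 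The surviving contributions cancel, but none of them vanishes. So ``the same commutator bookkeeping used to prove Lemma \ref{lem:1}'' cannot close the induction term by term; one would have to organize and verify such cancellations for all charges, and that is precisely the part your proposal defers. In its present form the proof is incomplete at its central step.

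Note that the paper's own proof never touches modes of composite-charge products, and that is what makes it short. It proves two charge-one localities: (\ref{eq:14}), of order $3$, for $x_{\alpha_1}(z_1)$ against $x_{n_2\alpha_2}(z_2)$, directly from (\ref{eq:4}) and Lemma \ref{lem:1} a)--d) (here term-by-term vanishing of the coefficients genuinely holds), and (\ref{eq:15}), of order $1$, for $x_{n_1\alpha_1}(z_1)$ against $x_{\alpha_2}(z_2)$, from parts e)--f). Then, since $x_{n_1\alpha_1}(z_1)=x_{\alpha_1}(z_1)^{n_1}$ and $x_{n_2\alpha_2}(z_2)=x_{\alpha_2}(z_2)^{n_2}$, it commutes one charge-one factor at a time: iterating (\ref{eq:14}) $n_1$ times gives the exponent $3n_1$, iterating (\ref{eq:15}) $n_2$ times gives the exponent $n_2$, and choosing whichever factorization corresponds to the smaller of the two yields exactly $\min\{3n_1,n_2\}$. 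If you want to keep your mode-vanishing formulation, the honest repair is to prove (\ref{eq:13}) by this two-sided argument first and then read off $c_j=0$ from the equivalence you established, rather than attempting the direct induction on $n_1$.
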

\begin{proof}
Note, that from commutator formula for vertex operators (\ref{eq:4}), statements a), b), c) and d) of Lemma~\ref{lem:1} and properties of 
$\delta$-function we have 
\begin{equation}\label{eq:14} 
(z_{1}-z_{2})^{3}x_{\alpha_{1}}(z_{1})x_{n_2\alpha_{2}}(z_{2})=(z_{1}-z_{2})^{3}x_{n_{2}\alpha_{2}}(z_{2})x_{\alpha_{1}}(z_{1}).\end{equation}
In a similar way, using e) and f) parts of Lemma~\ref{lem:1} we have
\begin{equation}\label{eq:15}
(z_{1}-z_{2})x_{n_1\alpha_{1}}(z_{1})x_{\alpha_{2}}(z_{2})=(z_{1}-z_{2})x_{\alpha_{2}}(z_{2})x_{n_1\alpha_{1}}(z_{1}).\end{equation}
Now, from (\ref{eq:14}) and (\ref{eq:15}) follows the lemma.
\end{proof}

By using derived relations we can define the set of quasi-particle monomials which generate our bases (acting on the highest weight vectors)   
$$
B_{W_{L(k\Lambda_{0})}}= \bigcup_{\substack{n_{r_{1}^{(1)},1}\leq \ldots \leq n_{1,1}\leq 
k\\\substack{n_{r_{2}^{(1)},2}\leq \ldots \leq n_{1,2}\leq 3k}}}\left(\text{or, equivalently,} \ \ \ 
\bigcup_{\substack{r_{1}^{(1)}\geq \cdots\geq r_{1}^{(k)}\geq 0\\\substack{ r_{2}^{(1)}\geq \cdots\geq r_{2}^{(3k)}\geq 
0}}}\right)
$$
$$
\left\{b\right.= b(\alpha_{2}) b(\alpha_{1})
=x_{n_{r_{2}^{(1)},2}\alpha_{2}}(m_{r_{2}^{(1)},2})\cdots x_{n_{1,2}\alpha_{2}}(m_{1,2}) x_{n_{r_{1}^{(1)},1}\alpha_{1}}(m_{r_{1}^{(1)},1})\cdots  x_{n_{1,1}\alpha_{1}}(m_{1,1}):
$$
$$
\left|
\begin{array}{l}
m_{p,1}\leq  -n_{p,1}- \sum_{p>p'>0} 2 \ \text{min}\{n_{p,1}, n_{p',1}\},  \ 1\leq  p\leq r_{1}^{(1)};\\
m_{p+1,1} \leq   m_{p,1}-2n_{p,1} \  \text{if} \ n_{p+1,1}=n_{p,1}, \ 1\leq  p\leq r_{1}^{(1)}-1;\\
m_{p,2}\leq  -n_{p,2} + \sum_{q=1}^{r_{1}^{(1)}}\text{min}\left\{ 3n_{q,1},n_{p,2 }\right\} - \sum_{p>p'>0} 2 \ \text{min}\{n_{p,2}, n_{p',2}\}, \  1\leq  p\leq r_{2}^{(1)};\\
m_{p+1,2}\leq  m_{p,2}-2n_{p,2} \  \text{if} \ n_{p,2}=n_{p+1,2}, \  1\leq  p\leq r_{2}^{(1)}-1 %\\
\end{array}\right\}.
$$
The condition on energies of quasi-particles colored with color $i=2$ contains a part which follows from relation \ref{eq:13}. 
The other conditions on energies which follow from relations (\ref{eq:6} -- \ref{eq:12}) are similar to difference conditions as in the case of $B_2^{(1)}$.  

Now, we can state the Proposition~\ref{pro:1}, whose proof follows closely \cite{G}:
\begin{proposition}\label{pro:1} 
The set $\mathfrak{B}_{W_{L(k\Lambda_{0})}}=\left\{bv_{L(k\Lambda_{0})}:b \in B_{W_{L (k\Lambda_{0})}}\right\}$ spans the principal subspace $W_{L(k\Lambda_{0})}$.
$$\ \ \ \ \ \ \ \ \ \ \ \ \ \ \ \ \ \ \ \ \ \ \ \ \ \ \ \ \ \ \ \ \ \ \ \ \ \ \ \ \ \ \ \ \ \ \ \ \ \ \ \ \ \ \ \ \ \ \ \ \ \ \ \ \ \ \ \ \ \ \ \ \ \ \ \ \ \ \ \ \ \ \ \ \ \ \ \ \ \ \ \ \ \ \ \ \ \ \ \ \ \ \ \ \ \ \ \ \square$$
\end{proposition}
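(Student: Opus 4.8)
The plan is to show that an arbitrary element of $W_{L(k\Lambda_0)}$ can be rewritten as a linear combination of monomials in $\mathfrak{B}_{W_{L(k\Lambda_0)}}$. By the remark following the definition of $U=U(\mathcal{L}(\mathfrak{n}_{\alpha_2}))U(\mathcal{L}(\mathfrak{n}_{\alpha_1}))$, the principal subspace is already spanned by $Uv_{L(k\Lambda_0)}$, i.e. by vectors obtained by applying monochromatic quasi-particle monomials in color $2$ followed by those in color $1$. So I may start from an arbitrary monomial of the form $b(\alpha_2)b(\alpha_1)v_{L(k\Lambda_0)}$ and show it lies in the span of $\mathfrak{B}_{W_{L(k\Lambda_0)}}$.

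First I would pass from raw products of the generators $x_{\alpha_i}(m)$ to quasi-particle monomials of definite charge-type, using the relations (\ref{eq:6}) and (\ref{eq:7}) to bound charges by $n_{p,1}\le k$ and $n_{p,2}\le 3k$, so that only admissible charges occur. Within each color I would then impose the difference conditions on energies. For this the key inputs are the monochromatic relations (\ref{eq:11}) and (\ref{eq:12}): whenever a pair of adjacent equal-charge quasi-particles of the same color violates the required gap $m_{p+1,i}\le m_{p,i}-2n_{p,i}$, or more generally whenever the leading-energy bound $m_{p,i}\le -n_{p,i}-\sum_{p>p'}2\min\{n_{p,i},n_{p',i}\}$ fails, I rewrite the offending factor as a combination of monomials whose energies are strictly lower (hence admissible after finitely many steps) together with monomials carrying a strictly higher charge $x_{(n+1)\alpha_i}$. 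The higher-charge terms are handled by an induction on charge-type using the order defined in Subsection~2.1, exactly as in \cite{Bu1}--\cite{Bu2} and \cite{G}; this is the argument that makes the reduction terminate.

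The genuinely new ingredient, and the part most specific to $G_2^{(1)}$, is the cross-color energy bound $m_{p,2}\le -n_{p,2}+\sum_{q=1}^{r_1^{(1)}}\min\{3n_{q,1},n_{p,2}\}-\sum_{p>p'}2\min\{n_{p,2},n_{p',2}\}$, which is absorbed from the commutation relation (\ref{eq:13}). Here I would use Lemma~2.2 in the form $(z_1-z_2)^{\min\{3n_1,n_2\}}x_{n_1\alpha_1}(z_1)x_{n_2\alpha_2}(z_2)=(z_1-z_2)^{\min\{3n_1,n_2\}}x_{n_2\alpha_2}(z_2)x_{n_1\alpha_1}(z_1)$ to control how the two colors interact: this identity shows that after multiplying by the appropriate power of $(z_1-z_2)$ the two generating functions commute, so moving a color-$2$ quasi-particle past the color-$1$ factors shifts its energy by exactly the $\sum_q\min\{3n_{q,1},n_{p,2}\}$ contribution and leaves a remainder expressible through lower monomials in the chosen order. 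Iterating this over all color-$1$ quasi-particles produces precisely the stated shift in the admissible range for $m_{p,2}$.

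The main obstacle I expect is bookkeeping the interaction of the two reductions so that the process provably terminates. Lowering energies to satisfy the cross-color bound (via (\ref{eq:13})) can reintroduce violations of the monochromatic difference conditions within color $2$, and conversely the monochromatic reduction can spawn higher-charge quasi-particles to which the cross-color bound must be reapplied. The resolution is to run the whole argument as a single induction on charge-type with respect to the total order of Subsection~2.1: every rewriting step either strictly decreases the energy while fixing the charge-type (a well-founded descent, since energies are bounded below on $L(k\Lambda_0)$ by (\ref{eq:10})) or strictly increases some charge, and the charges are bounded above by $k$ and $3k$ thanks to (\ref{eq:6}) and (\ref{eq:7}). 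Verifying that the combined induction is well-founded, and that at each stage the remainder terms genuinely precede the original monomial in the order, is the technical heart of the proof; the explicit relations of Lemma~\ref{lem:1} and Lemma~2.2 supply all the algebraic identities needed to carry it out.
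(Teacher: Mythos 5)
Your proposal is correct and follows essentially the same route as the paper: the paper's proof of Proposition~\ref{pro:1} is given only by reference to Georgiev's argument in \cite{G}, and what you describe---starting from $Uv_{L(k\Lambda_0)}$, bounding charges via (\ref{eq:6})--(\ref{eq:7}), imposing the monochromatic difference conditions via (\ref{eq:11})--(\ref{eq:12}), extracting the cross-color shift $\sum_{q}\min\{3n_{q,1},n_{p,2}\}$ from relation (\ref{eq:13}), and organizing everything as a terminating induction on the charge-type order---is exactly that argument, including the one genuinely $G_2^{(1)}$-specific ingredient the paper highlights.
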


In the rest of this section we consider the proof of linear independence of the set $\mathfrak{B}_{W_{L(k\Lambda_{0})}}$. First, we introduce the properties of operators on a standard module level 1, which we will use in our proof.

\subsection{Projection \texorpdfstring{$\pi_{\mathcal{R}}$}{pi{R}}}
\label{sec:5}
Let $k>1$. We realize the principal subspace $W_{L(k\Lambda_{0})}$ as a subspace of the tensor product $W_{L(\Lambda_{0})}^{\otimes k}\subset  L(\Lambda_{0})^{\otimes k}$, where
$$
v_{L(k\Lambda_0)}=\underbrace{v_{L(\Lambda_{0})} \otimes \cdots \otimes v_{L(\Lambda_{0})}}_{k \ \text{factors}}$$
is the highest weight vector.

For a chosen dual-charge-type 
$$
\mathfrak{R}=\left( r_{2}^{(1)}, \ldots ,r_{2}^{(3k)}; r_{1}^{(1)}, \ldots , r_{1}^{(k)}\right),
$$
denote by $\pi_{\mathfrak{R}}$ the projection of principal subspace $W_{L(k\Lambda_{0})}$ 
to the subspace
$$ {W_{L(\Lambda_0)}}_{(\mu^{(k)}_{2};r_{1}^{(k)})}\otimes \cdots \otimes  {W_{L(\Lambda_0)}}_{(\mu^{(1)}_{2};r_{1}^{(1)})},
$$
where  
${W_{L(\Lambda_0)}}_{(\mu^{(t)}_{2};r_{1}^{(t)})}$
 is a $\mathfrak{h}$-weight subspace of weight $\mu^{(t)}_{2}\alpha_2+ r_{1}^{(t)}\alpha_1 \in Q$ with
$$
\mu^{(t)}_{2}=r^{(3t)}_{2}+ r^{(3t-1)}_{2}+ r^{(3t-2)}_{2},
$$
for every $1 \leq  t \leq k$. 

We shall denote by the same symbol $\pi_{\mathfrak{R}}$ the generalization of this projection to the space of formal series with coefficients in $W_{L(\Lambda_{0})}^{\otimes k}$.  
Let
\begin{equation}\label{eq:16} 
x_{n_{r_{2}^{(1)},2}\alpha_{2}}(z_{r_{2}^{(1)},2}) \cdots     x_{n_{1,2}\alpha_{2}}(z_{1,2})x_{n_{r_{1}^{(1)},1}\alpha_{1}}(z_{r_{1}^{(1)},1})\cdots  x_{n_{1,1}\alpha_{1}}(z_{1,1})
\end{equation}
be a generating function of the chosen dual-charge-type $\mathfrak{R}$ and the corresponding charge-type $\mathfrak{R}'$. Then, from relations (\ref{eq:6}) and (\ref{eq:7}) and definition of the action of Lie algebra on the modules, follows that the projection of the generating function (\ref{eq:16}) is
\begin{equation}\label{eq:39}
\pi_{\mathfrak{R}} x_{n_{r_{2}^{(1)},2}\alpha_{2}}(z_{r_{2}^{(1)},2})\cdots  x_{n_{1,1}\alpha_{1}}(z_{1,1}) \ v_{L(k\Lambda_{0})}
\end{equation}
$$=\text{C} \ x_{n^{(k)}_{r^{(3k-2)}_{2},2}\alpha_{2}}(z_{r_{2}^{(3k-2)},2})\cdots                x_{n^{(k)}_{r^{(3k-1)}_{2},2}\alpha_{2}}(z_{r^{(3k-1)}_{2},2})\cdots x_{n^{(k)}_{r^{(3k)}_{2},2}\alpha_{2}}(z_{r^{(3k)}_{2},2})\cdots$$
$$ \ \ \ \ \ \ \ \ \ \ \ \ \ \ \ \ \ \ \ \ \ \ \ \ \ \ \ \ \ \ \ \ \ \ \ \  \cdots x_{n^{(k)}_{1,2}\alpha_{2}}(z_{1,2}) x_{n^{(k)}_{r^{(k)}_{1},1}\alpha_{1}}(z_{r_{1}^{(k)},1})\cdots   x_{n^{(k)}_{1,1}\alpha_{1}}(z_{1,1}) \ v_{L(\Lambda_{0})}$$
$$ \ \ \ \ \ \ \ \ \ \ \ \ \ \ \ \ \otimes \cdots \otimes$$
$$
\otimes x_{n_{r^{(1)}_{2},2}^{(1)}\alpha_{2}}(z_{r_{2}^{(1)},2})\cdots  x_{n_{r^{(2)}_{2},2}^{(1)}\alpha_{2}}(z_{r_{2}^{(2)},2})\cdots  x_{n_{r^{(3)}_{2},2}^{(1)}\alpha_{2}}(z_{r_{2}^{(3)},2})\cdots 
  x_{n_{1,2}^{(1)}\alpha_{2}}(z_{1,2})$$
$$ \ \ \ \ \ \ \ \ \ \ \ \ \ \ \ \ \ \ \ \ \ \ \ \ \ \ \ \ \ \ \ \ \ \ \ \ x_{n_{r^{(1)}_{1},1}^{(1)}\alpha_{1}}(z_{r_{1}^{(1)},1})\cdots     x_{n{_{1,1}^{(1)}\alpha_{1}}}(z_{1,1}) \ v_{L(\Lambda_{0})},$$
where $\text{C} \in \mathbb{C}^{*}$, 
$$
0 \leq  n^{(t)}_{p,2}\leq  3, \ \ n^{(1)}_{p,2}\geq  n^{(2)}_{p,2} \geq  \ldots  \geq  n^{(k-1)}_{p,2} \geq  n^{(k)}_{p,2}, \ n_{p,2}=\sum_{t=1}^k n^{(t)}_{p,2},
$$
for every every $p$, $1 \leq p \leq r_{2}^{(1)}$, so that at most one $n^{(t)}_{p,2}$ ($1 \leq t \leq k$) can be $1$ or 2 
and 
$$
0 \leq  n^{(t)}_{p,1} \leq 1, \ \ 1 \leq t \leq k, \ n^{(1)}_{p,1}\geq n^{(2)}_{p,1}\geq \ldots \geq  n^{(k-1)}_{p,1}\geq  n^{(k)}_{p,1}, \ 
n_{p,1}=\sum_{t=1}^k n^{(t)}_{p,1},
$$
for every every $p$, $1 \leq p \leq r_{1}^{(1)}$.

\begin{example}\label{exa:1} 
In the case when $k=2$ the projection $\pi_{\mathfrak{R}}$, where $\mathfrak{R}=\left(6,5,4,3,2,1;3,2\right)$, of generating function 
$$
x_{\alpha_{2}}(z_{6,2})x_{2\alpha_{2}}(z_{5,2})x_{3\alpha_{2}}(z_{4,2})x_{4\alpha_{2}}(z_{3,2})x_{5\alpha_{2}}(z_{2,2})x_{6\alpha_{2}}(z_{1,2})x_{\alpha_{1}}(z_{3,1})x_{2\alpha_{1}}(z_{2,1})x_{2\alpha_{1}}(z_{1,1}) 
$$
on 
$${W_{L(\Lambda_0)}}_{(6;2)}\otimes  {W_{L(\Lambda_0)}}_{(15;3)}$$
can be represented graphically as in the Figure~\ref{fig:1}, where at most one generating function of color $i=1$ is placed on every tensor factor and at most three generating functions of color $i=2$ are placed on every tensor factor.
\end{example}

\bigskip
\begin{figure}[ht]
\begin{center}
\setlength{\unitlength}{8mm}
\begin{picture}(10,5)
\linethickness{0.1mm}
\multiput(0,0)(1,0){1}%
{\line(0,1){1}}
\multiput(1,0)(1,0){1}%
{\line(0,1){2}}
\multiput(2,0)(1,0){1}%
{\line(0,1){3}}
\multiput(3,0)(1,0){1}%
{\line(0,1){4}}
\multiput(4,0)(1,0){1}%
{\line(0,1){5}}
\multiput(5,0)(1,0){1}%
{\line(0,1){6}}
\linethickness{0.75mm}
\multiput(6,0)(1,0){1}%
{\line(0,1){6}}
\linethickness{0.1mm}
\multiput(7,0)(1,0){1}%
{\line(0,1){1}}
\multiput(8,0)(1,0){1}%
{\line(0,1){1}}
\multiput(9,0)(1,0){1}%
{\line(0,1){1}}
\multiput(0,0)(0,1){2}%
{\line(1,0){9}}
\multiput(1,2)(0,1){1}%
{\line(1,0){5}}
\linethickness{0.75mm}
\multiput(2,3)(0,1){1}%
{\line(1,0){7}}
\linethickness{0.1mm}
\multiput(3,4)(0,1){1}%
{\line(1,0){3}}
\multiput(4,5)(0,1){1}%
{\line(1,0){2}}
\multiput(5,6)(0,1){1}%
{\line(1,0){1}}
\multiput(7,3)(0,1){1}%
{\line(0,1){1}}
\multiput(8,3)(0,1){1}%
{\line(0,1){1}}
\multiput(9,3)(0,1){1}%
{\line(0,1){1}}
\multiput(7,4)(0,1){1}%
{\line(1,0){2}}
\put(9.99,0.5){\scriptsize{\footnotesize{$v_{L(\Lambda_0)}$}}}
\put(9.99,3.5){\scriptsize{\footnotesize{$v_{L(\Lambda_0)}$}}}
\end{picture}
\bigskip
\caption{Example~\ref{exa:1}}
\label{fig:1}%\label{slika}
\end{center}
\end{figure}

We define the projection of monomial vector $bv_{L(k\Lambda_{0})}$, with $b\in B_{W_{L(k\Lambda_0)}}$ 
colored with color-type $(r_{2},r_{1}),$ charge-type $\mathfrak{R}'$ and dual-charge-type $\mathfrak{R}$
\begin{equation}\label{eq:17} 
b=x_{n_{r_{2}^{(1)},2}\alpha_{2}}(m_{r_{2}^{(1)},2})\cdots      x_{n_{1,2}\alpha_{2}}(m_{1,2})x_{n_{r_{1}^{(1)},1}\alpha_{1}}(m_{r_{1}^{(1)},1})\cdots  x_{n_{1,1}\alpha_{1}}(m_{1,1})
\end{equation}
as a coefficient of the projection of the generating function  (\ref{eq:39}) which we denote as $$\pi_{\mathfrak{R}}bv_{L(k\Lambda_{0})}.$$
If $\bar{b}\in B_{W_{L(k\Lambda_0)}}$ is a monomial of charge-type $(\bar{n}_{{\bar{r}}_{2}^{(1)},2}, \ldots ,  \bar{n}_{1,2}; \bar{n}_{{\bar{r}}_{1}^{(1)},1}, \ldots , \bar{n}_{1,1}),$ dual-charge-type 
$\bar{\mathfrak{R}}=\left( {\bar{r}}_{2}^{(1)}, \ldots ,{\bar{r}}_{2}^{(3k)};{\bar{r}}_{1}^{(1)}, \ldots , {\bar{r}}_{1}^{(k)}\right)$ and such that 
$$
b<\bar{b},
$$
then, from the definition of projection, follows that 
$$
\pi_{\mathfrak{R}}\bar{b}v_{L(k\Lambda_0)}=0.
$$
We will use this property of projection $\pi_{\mathfrak{R}}$ in the proof of linear independence.
 
\subsection{\textbf{Operator \texorpdfstring{$A_{\theta}$}{Atheta}}}
\label{sec:6}
Denote by $A_{\theta}$ the coefficient of an intertwining operator 
$x_{\theta}(z)$ 
$$
A_{\theta}=\text{Res}_z z^{-1} \ x_{\theta}(z) = x_{\theta} (-1)
$$
which commutes with the action of $\mathcal{L}(\mathfrak{n}_+)$ and such that
\begin{equation}\label{eq:18} 
A_{\theta}v_{L(\Lambda_0)}=x_{\theta}(-1)v_{L(\Lambda_0)}.
\end{equation}  
We act with operator 
$$
1\otimes\cdots \otimes  A_{\theta} \otimes \underbrace{1 \otimes \cdots \otimes 1}_{s-1 \ \text{factors}}, \ \ s\leq k
$$
on the vector $bv_{L(k\Lambda_0)}\in \mathcal{B}_{W_{L(k\Lambda_0)}}$, where quasi-particle monomial $b$ is as in (\ref{eq:17}).
From the definition of projection, it follows that vector 
$$
\left(1\otimes\cdots \otimes 1\otimes A_{\theta} \otimes 1 \otimes \cdots \otimes 1\right)(\pi_{\mathfrak{R}}bv_{L(k\Lambda_0)})
$$
is the coefficient of  
\begin{equation}\label{eq:19} 
(1\otimes\cdots \otimes A_{\theta} \otimes 1 \otimes \cdots \otimes  1)\pi_{\mathfrak{R}}x_{n_{r_{2}^{(1)},2}\alpha_{2}}(z_{r_{2}^{(1)},2})\cdots x_{s\alpha_{1}}(z_{1,1})v_{L(k\Lambda_0)}.
\end{equation}
From (\ref{eq:18}) it follows that in the $s$-th tensor row of (\ref{eq:19}) we have  
$$\otimes  x_{n^{(s)}_{r^{(3s-2)}_{2},2}\alpha_{2}}(z_{r_{2}^{(3s-2)},2})\cdots   x_{n^{(s)}_{r^{(3s-1)}_{2},2}\alpha_{2}}(z_{r^{(3s-1)}_{2},2})\cdots   x_{n^{(s)}_{r^{(3s)}_{2},2}\alpha_{2}}(z_{r^{(3s)}_{2},2})\cdots  x_{n^{(s)}_{1,2}\alpha_{l}}(z_{1,2})$$
\begin{equation}\label{eq:20} 
x_{n^{(s)}_{r^{(s)}_{1},1}\alpha_{1}}(z_{r_{1}^{(s)},1})\cdots  x_{\alpha_{1}}(z_{1,1})x_{\theta}(-1)v_{L(\Lambda_0)}\otimes \cdots ,\end{equation}
where $ 0 \leq  n^{(s)}_{p,1} \leq 1$, for $1\leq  p \leq  r^{(s)}_{1}$ and $ 0 \leq  n^{(s)}_{p,2} \leq 3$, for $1\leq p \leq    r^{(3s-2)}_{2}$.  

\subsubsection{\textbf{Operators \texorpdfstring{$e_{\alpha }$}{ealpha}}}\label{S66B}
\label{sec:7} 
For every root $\alpha  \in R$, we define on the level $1$ standard module $L(\Lambda_0)$, the ``Weyl group translation'' operator  $e_{\alpha}$ by
$$
 e_{\alpha}=\exp\  x_{-\alpha}(1)\exp\  (- x_{\alpha}(-1))\exp\  x_{-\alpha}(1) \exp\ x_{\alpha}(0)\exp\   (-x_{-\alpha}(0))\exp\ x_{\alpha}(0),$$
(for properly normalized root vectors, cf. \cite{K}). Then on $L(\Lambda_0)$ we have
\begin{equation}\label{eq:21} 
 e_{\alpha}v_{L(\Lambda_0)}=-x_{\alpha}(-1)v_{L(\Lambda_0)}
\end{equation}
\begin{equation}\label{eq:22} 
x_{\beta}(j)e_{\alpha}=e_{\alpha}x_{\beta}(j-\beta(\alpha\sp\vee)), \ \ \beta \in R, \ \ j \in \mathbb{Z}.
\end{equation}
For $\alpha=\theta$, from (\ref{eq:21}) and (\ref{eq:22}), it follows that we can (\ref{eq:20}) write as  
$$
\cdots \otimes x_{n^{(s)}_{r^{(3s-2)}_{l},2}\alpha_{2}}(z_{r_{2}^{(3s-2)},2})\cdots   x_{n^{(s)}_{r^{(3s-1)}_{2},2}\alpha_{2}}(z_{r^{(3s-1)}_{2},2})\cdots           x_{n^{(s)}_{r^{(3s)}_{2},2}\alpha_{2}}(z_{r^{(3s)}_{2},2})\cdots  x_{n^{(s)}_{1,2}\alpha_{l}}(z_{1,2})$$
$$
x_{n^{(s)}_{r_{1}^{(k)},1}\alpha_{1}}(z_{r_{1}^{(k)},1})z_{r_{1}^{(k)},1}\cdots x_{\alpha_{1}}(z_{1,1})z_{1,1}v_{L(\Lambda_{0})}\otimes\cdots.$$
By taking the corresponding coefficients, we have
$$
(1\otimes\cdots \otimes 1\otimes A_{\theta} \otimes 1 \otimes \cdots \otimes 1)\pi_{\mathfrak{R}}bv_{L(k\Lambda_0)}=(1\otimes\cdots \otimes 1\otimes      e_{\theta} \otimes 1 \otimes \cdots \otimes 1)\pi_{\mathfrak{R}}b^{+}v_{L(k\Lambda_0)}
$$
where 
\begin{align}\nonumber
b^{+}=b^+(\alpha_{2}) b^{+}(\alpha_{1})&=b(\alpha_{2})x_{n_{r_{1}^{(1)},1}\alpha_{1}}(m_{r_{1}^{(1)},1}+1)\cdots x_{s\alpha_{1}}(m_{1,1}+1).&
\end{align}

Now, let $\alpha=\alpha_1$. We consider the projection $\pi_{\mathfrak{R}}bv_{L(k\Lambda_{0})}$ of the monomial vector $bv_{L(k\Lambda_{0})}$ where $b \in B_{W_{L(k\Lambda_{0})}}$ is a monomial   
\begin{equation}\label{eq:23}
b=b(\alpha_{2})b(\alpha_{1})x_{s\alpha_{1}}(-s)\end{equation}
$$=x_{n_{r^{(1)}_{2},2}\alpha_{2}}(m_{r^{(1)}_{2},2})\cdots     x_{n_{1,2}\alpha_{2}}(m_{1,2})x_{n_{r^{(1)}_{1},1}\alpha_{1}}(m_{r^{(1)}_{1},1})\cdots  x_{n_{2,1}\alpha_{1}}(m_{2,1})x_{s\alpha_{1}}(-s),$$
of dual-charge-type 
$$
\mathfrak{R}=\left(r^{(1)}_{2},\ldots, r^{(3k)}_{2};r^{(1)}_{1},\ldots, r_1^{(s)},0 \ldots, 0\right).
$$
The projection is a coefficient of the generating function
$$\pi_{\mathfrak{R}}x_{n_{r_{2}^{(1)},2}\alpha_{2}}(z_{r_{2}^{(1)},2})\cdots
 x_{n_{1,2}\alpha_{2}}(z_{1,2})x_{n_{r_{1}^{(1)},1}\alpha_{1}}(z_{r_{1}^{(1)},1}) 
\cdots  x_{n_{2,1}\alpha_{1}}(z_{2,1})$$
$$\left(v_{L(\Lambda_{0})}\otimes
 \cdots \otimes  v_{L(\Lambda_{0})}\otimes x_{\alpha_{1}}(-1)v_{L(\Lambda_{0})}\otimes \cdots \otimes  x_{\alpha_{1}}(-1)v_{L(\Lambda_{0})}\right)$$
$$=Cx_{n^{(k)}_{r^{(3k-2)}_{2},2}\alpha_{2}}(z_{r_{2}^{(3k-2)},2}) \cdots 
 x_{n^{(k)}_{r^{(3k-1)}_{2},2}\alpha_{2}}(z_{r^{(3k-1)}_{2},2})\cdots x_{n^{(k)}_{r^{(3k)}_{2},2}\alpha_{2}}(z_{r^{(3k)}_{2},2})\cdots
 x_{n^{(k)}_{1,2}\alpha_{2}}(z_{1,2})v_{L(\Lambda_{0})}$$
$$\otimes \cdots \otimes $$
$$\otimes x_{n^{(s)}_{r^{(3s-2)}_{2},2}\alpha_{2}}(z_{r_{2}^{(3s-2)},2}) \cdots 
 x_{n^{(s)}_{r^{(3s-1)}_{2},2}\alpha_{2}}(z_{r^{(3s-1)}_{2},2})\cdots x_{n^{(s)}_{r^{(3s)}_{2},2}\alpha_{2}}(z_{r^{(3s)}_{2},2})\cdots  x_{n^{(s)}_{1,2}\alpha_{2}}(z_{1,2})$$
$$ x_{n^{(s)}_{r^{(s)}_{1},1}\alpha_{1}}(z_{r_{1}^{(s)},1})\cdots    x_{n^{(s)}_{2,1}\alpha_{1}}(z_{2,1})e_{\alpha_{1}}v_{L(\Lambda_{0})}$$
$$\otimes \cdots \otimes $$
$$\otimes x_{n_{r^{(1)}_{2},2}^{(1)}\alpha_{2}}(z_{r_{2}^{(1)},2})\cdots 
 x_{n_{r^{(2)}_{2},2}^{(1)}\alpha_{2}}(z_{r_{2}^{(2)},2})\cdots x_{n_{r^{(3)}_{2},2}^{(1)}\alpha_{2}}(z_{r_{2}^{(3)},2})\cdots x_{n_{2,2}^{(1)}\alpha_{2}}(z_{2,2})
x_{n_{1,2}^{(1)}\alpha_{2}}(z_{1,2})$$
$$x_{n_{r^{(1)}_{1},1}^{(1)}\alpha_{1}}(z_{r_{1}^{(1)},1})\cdots   x_{n{_{2,1}^{(1)}\alpha_{1}}}(z_{2,1})e_{\alpha_{1}}v_{L(\Lambda_{0})}.$$
Now, if we shift $1 \otimes \cdots \otimes  e_{\alpha_{1}} \otimes e_{\alpha_{1}} \otimes \cdots \otimes  e_{\alpha_{1}}$ all the way to left using commutation relations (\ref{eq:22}) 
we get
$$
\pi_{\mathfrak{R}}bv_{L(k\Lambda_{0})}=(1 \otimes \cdots \otimes  e_{\alpha_{1}} \otimes e_{\alpha_{1}} \otimes \cdots \otimes  e_{\alpha_{1}})\pi_{\mathfrak{R}^{-}} b'v_{k\Lambda_{0}}, 
$$
where $b'$ is quasi-particle monomial
\begin{equation}\label{eq:24}
b'=b'(\alpha_{2})b'(\alpha_{1})\end{equation}
$$=x_{n_{r^{(1)}_{2},2}\alpha_{2}}(m_{r^{(1)}_{2},2}-n^{(1)}_{r_{2}^{(1)},2}-\cdots-n^{(s)}_{r_{2}^{(1)},2})\cdots x_{n_{1,2}\alpha_{2}}(m_{1,2}-n^{(1)}_{1,2}-\cdots-n^{(s)}_{1,2})$$
$$ \ \ \ \ \ \ \ \ \ \ \ \ \ \ \ \ \ \ \ \ \ \ \ \ \ \ \  x_{n_{r^{(1)}_{1},1}\alpha_{1}}(m_{r^{(1)}_{1},1}+2n_{r^{(1)}_{1}})\cdots   x_{n_{2,1}\alpha_{1}}(m_{2,1}+2n_{1,2})$$
$$=x_{n_{r^{(1)}_{2},2}\alpha_{2}}(m'_{r^{(1)}_{2},2})\cdots 
 x_{n_{1,2}\alpha_{2}}(m'_{1,2})x_{n_{r^{(1)}_{1},1}\alpha_{1}}(m'_{r^{(1)}_{1},1})\cdots x_{n_{2,1}\alpha_{1}}(m'_{2,1}),$$
$0 \leq  n^{(t)}_{p,2}\leq  3, \ \ 0 \leq  n^{(t)}_{p,1}\leq  1, \ \ 1 \leq p \leq r_i^{(s)}, 1\leq t \leq s$ of dual-charge-type
$$\mathfrak{R}^{-}=\left(r^{(1)}_{2},\ldots, r^{(3s)}_{2};r^{(1)}_{1}-1,\ldots, r_1^{(s)}-1\right).
$$

\begin{proposition}\label{pro:2}
Monomial $b'$ (\ref{eq:24}) is an element of the set $B_{W_{L(k\Lambda_{0})}}$. 
\end{proposition}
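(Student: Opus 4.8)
The plan is to verify that the monomial $b'$ from (\ref{eq:24}) satisfies all four defining inequalities of the set $B_{W_{L(k\Lambda_0)}}$, as listed in the definition preceding Proposition~\ref{pro:1}. Since $b$ is assumed to lie in $B_{W_{L(k\Lambda_0)}}$ and the operators $e_{\alpha_1}$ act by shifting energies in a controlled way (via (\ref{eq:22})), the strategy is simply to track how each energy $m_{p,i}$ is modified and check that the new energies $m'_{p,i}$ still meet the required bounds for the shifted dual-charge-type $\mathfrak{R}^-$. The charge-type of $b'$ is unchanged from that of $b$ minus the color-$1$ quasi-particle $x_{s\alpha_1}(-s)$ that was stripped off, so the combinatorial data $(n_{p,2}, n_{p,1})$ are inherited directly; what must be checked are the four energy conditions.

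First I would treat the color-$1$ conditions. From the explicit formula in (\ref{eq:24}) the color-$1$ energies are raised, $m'_{p,1} = m_{p,1} + 2n_{p,1}$ (reading off the shift $+2n_{r_1^{(1)}}$, etc.), while the dual-charge indices drop by one, $r_1^{(t)} \mapsto r_1^{(t)}-1$. The key observation is that removing the rightmost color-$1$ quasi-particle $x_{s\alpha_1}(-s)$ decreases each sum $\sum_{p>p'>0} 2\,\textup{min}\{n_{p,1}, n_{p',1}\}$ by exactly $2\,\textup{min}\{n_{p,1}, s\} = 2 n_{p,1}$ (since $n_{p,1}\le s$ for the surviving factors, as $s = n_{1,1}$ is the largest charge). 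Hence the upper bound on $m'_{p,1}$ that we must verify is exactly $2n_{p,1}$ larger than the old bound on $m_{p,1}$, which is precisely the amount by which the energy was raised; the first color-$1$ inequality is therefore preserved. The second color-$1$ inequality (the difference condition for equal charges) is preserved because all surviving energies are shifted by the same amount $2n_{p,1}$ when $n_{p+1,1}=n_{p,1}$.

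Next I would handle the color-$2$ conditions, which is where the interaction between colors enters. Here the color-$2$ energies are \emph{lowered}, $m'_{p,2} = m_{p,2} - n^{(1)}_{p,2} - \cdots - n^{(s)}_{p,2}$, and this shift is accounted for by the term $\sum_{q=1}^{r_1^{(1)}} \textup{min}\{3n_{q,1}, n_{p,2}\}$ appearing in the upper bound on $m_{p,2}$: removing the single color-$1$ quasi-particle of charge $s$ decreases this sum by $\textup{min}\{3s, n_{p,2}\} = n^{(1)}_{p,2}+\cdots+n^{(s)}_{p,2}$ (using the constraint $0\le n^{(t)}_{p,2}\le 3$ and the level-$1$ projection structure from (\ref{eq:39})), which matches the downward shift exactly. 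Thus the third color-$2$ inequality is preserved; the fourth (the equal-charge difference condition for color $2$) is immediate since those energies are all shifted by the identical amount and the color-$2$ charges $n_{p,2}$ are untouched.

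The main obstacle I anticipate is the bookkeeping in the color-$2$ bound: one must confirm that the quantity $\textup{min}\{3s, n_{p,2}\}$ removed from the sum $\sum_q \textup{min}\{3n_{q,1}, n_{p,2}\}$ coincides with the actual energy shift $n^{(1)}_{p,2}+\cdots+n^{(s)}_{p,2}$ coming from the commutation relations (\ref{eq:22}). This requires invoking the level-$1$ decomposition of the projection (the constraints $0\le n^{(t)}_{p,2}\le 3$ with at most one tensor slot carrying a value in $\{1,2\}$) together with relation (\ref{eq:13}), so that the per-tensor-factor charges $n^{(t)}_{p,2}$ genuinely sum to $\textup{min}\{3s, n_{p,2}\}$. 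Once this identity is established, all four defining conditions follow by direct comparison, and $b' \in B_{W_{L(k\Lambda_0)}}$.
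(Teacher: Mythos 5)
Your proof is correct and takes essentially the same route as the paper: you track the explicit energy shifts ($+2n_{p,1}$ for color $1$, $-\min\{3s,n_{p,2}\}=-(n^{(1)}_{p,2}+\cdots+n^{(s)}_{p,2})$ for color $2$) and check that each shift exactly matches the change in the defining bounds caused by removing $x_{s\alpha_1}(-s)$, which is precisely the paper's argument. The only cosmetic differences are that the paper writes out only the color-$2$ verification (citing the $B_2^{(1)}$ case for color $1$) and splits your $\min\{3s,n_{p,2}\}$ identity into the two cases $n_{p,2}\geq 3s$ and $n_{p,2}<3s$ instead of treating it uniformly.
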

\begin{proof}
We will prove that $m'_{p,2}$, $2  \leq p  \leq r_{1}^{(1)}$ and $1  \leq p  \leq r_{2}^{(1)}$ 
satisfy the same difference conditions as energies of quasi-particle monomials from the set $B_{W_{L(k\Lambda_{0})}}$. We will consider only energies $m'_{p,2}$, since for the color $i=1$ the proof is similar as in the case of $B_2^{(1)}$. We have two cases:
\begin{enumerate}
\item [1)] if $n_{p,2} \geq 3s$, then we have:
\begin{align}\nonumber
m'_{p,2}&= m_{p,2}-3s&\\
\nonumber
& \leq -n_{p,2} + \sum_{q=1}^{r_{1}^{(1)}}\text{min}\left\{ 3n_{q,1},n_{p,2 }\right\} - \sum_{p>p'>0} 2 \ \text{min}\{n_{p,2}, n_{p',2}\}-3s;&
\end{align}
\begin{align}\nonumber
m'_{p+1,2}&= m_{p+1,2}-3s&\\
\nonumber
& \leq m_{p,2} -2n_{p,2 }-3s&\\
\nonumber
& = m'_{p,2}-2n_{p,2 } \ \ \text{when} \ \  n_{p,2 } =n_{p+1,2 } ;
\end{align}
\item[2)] if $n_{p,2} < 3s$, then we have:
\begin{align}\nonumber
m'_{p,2}&= m_{p,2}-n_{p,2}&\\
\nonumber
& \leq -n_{p,2} + \sum_{q=1}^{r_{1}^{(1)}}\text{min}\left\{ 3n_{q,1},n_{p,2 }\right\} - \sum_{p>p'>0} 2 \ \text{min}\{n_{p,2}, n_{p',2}\}-n_{p,2}&\\
\nonumber
& = -n_{p,2} + \sum_{q=2}^{r_{1}^{(1)}}\text{min}\left\{ 3n_{q,1},n_{p,2 }\right\} - \sum_{p>p'>0} 2 \ \text{min}\{n_{p,2}, n_{p',2}\};&
\end{align}
\begin{align}\nonumber
m'_{p+1,2}&= m_{p+1,2}-n_{p,2}&\\
\nonumber
& \leq m_{p,2} -2n_{p,2 }-n_{p,2}&\\
\nonumber
& = m'_{p,2}-2n_{p,2 } \ \ \text{when} \ \  n_{p,2 } =n_{p+1,2 } .&
\end{align}
\end{enumerate}
\end{proof}

\subsection{Proof of linear independence}
\label{sec:8}
We prove linear independence of the set $\mathfrak{B}_{W_{L(k\Lambda_0)}}$  by induction on charge-type of monomials from the set $B_{W_{L(k\Lambda_0)}}$. Then from the Proposition~\ref{pro:1} will follow 
  
\begin{theorem}\label{thm:1}
The set $\mathfrak{B}_{W_{L(k\Lambda_0)}}$ is a basis of the principal subspace $W_{L(k\Lambda_0)}$.
\end{theorem}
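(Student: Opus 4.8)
The plan is to prove the linear independence of $\mathfrak{B}_{W_{L(k\Lambda_0)}}$; combined with the spanning statement of Proposition~\ref{pro:1} this gives the theorem. So suppose a finite linear combination $\sum_{b} c_b\, b v_{L(k\Lambda_0)} = 0$ with $b \in B_{W_{L(k\Lambda_0)}}$ and $c_b \in \mathbb{C}$. Since each $b v_{L(k\Lambda_0)}$ is homogeneous for the $\mathfrak{h}$-weight grading, recorded by the color-type $(r_2,r_1)$, and for the energy grading, I may assume at the outset that all monomials occurring in the relation share one fixed color-type and one fixed total energy. The argument then proceeds by induction on the color-type $(r_2,r_1)$, the base case (one color absent, or $r_1=r_2=0$) being immediate from the monochromatic theory of Subsection~\ref{sec:4} and relations (\ref{eq:11})--(\ref{eq:12}).

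For the inductive step I would first isolate a single charge-type by means of the projection of Subsection~\ref{sec:5}. Let $\mathcal{R}'$ be the \emph{minimal} charge-type carried by a monomial with $c_b\neq 0$, and let $\mathfrak{R}$ be its conjugate dual-charge-type. Applying $\pi_{\mathfrak{R}}$ to the relation annihilates every monomial $\bar b$ with $b<\bar b$, i.e. of strictly larger charge-type; since $\mathcal{R}'$ is minimal and the order on charge-types is total, only the monomials of charge-type exactly $\mathcal{R}'$ survive. Their images lie in the tensor product $W_{L(\Lambda_0)}^{\otimes k}$, where each factor is a level $1$ principal subspace, and there, as the computation (\ref{eq:39}) shows, the color $1$ quasi-particles carry charge $n^{(t)}_{p,1}\in\{0,1\}$ and the color $2$ quasi-particles charge $n^{(t)}_{p,2}\in\{0,1,2,3\}$ on each tensor slot; this makes the combinatorics on a single factor tractable.

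Next I would descend in the induction using the operators of Subsection~\ref{sec:6}. Since $A_\theta=e_\theta$ commutes with $\mathcal{L}(\mathfrak{n}_+)$ and the translation operator $e_{\alpha_1}$ shifts energies by (\ref{eq:22}), shifting $1\otimes\cdots\otimes e_{\alpha_1}\otimes\cdots\otimes e_{\alpha_1}$ all the way to the left carries the projected relation for $b$ into one for the monomial $b'$ of (\ref{eq:24}), of strictly smaller dual-charge-type $\mathfrak{R}^{-}$, hence of strictly smaller color-type. By Proposition~\ref{pro:2} the monomial $b'$ again lies in $B_{W_{L(k\Lambda_0)}}$, so the inductive hypothesis applies to the reduced relation and forces the corresponding coefficients to vanish. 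Because $e_{\alpha_1}$ is invertible, the residual freedom is only in the energies of the monomials of charge-type $\mathcal{R}'$; this I would resolve by passing to generating functions, multiplying by the appropriate factors $(z_{p,i}-z_{p',i})$, imposing the difference conditions through (\ref{eq:11}), (\ref{eq:12}) and the cross-color relation (\ref{eq:13}), and extracting the coefficient of the top energy sequence, where a Vandermonde-type nonvanishing on each level $1$ factor separates distinct energy sequences. This yields $c_b=0$ for all $b$ of charge-type $\mathcal{R}'$; iterating over the remaining charge-types (each minimal among those left) and then over the outer induction on color-type closes the argument.

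The hard part will be exactly the cross-color bookkeeping. As noted in the introduction, there is no simple current operator available here as in the $B_2^{(1)}$ case, so the reduction must be effected through the combined use of $A_\theta$ and the translation operators $e_{\alpha_1}$ rather than a single current. The delicate point is that the energy bound for color $2$ carries the interaction term $\sum_{q}\text{min}\{3n_{q,1},n_{p,2}\}$ coming from (\ref{eq:13}), and one must check that the energy shift produced by $e_{\alpha_1}$ matches these difference conditions \emph{exactly}, so that $b'$ lands back in $B_{W_{L(k\Lambda_0)}}$; this matching is the content of Proposition~\ref{pro:2} and is what makes the induction close.
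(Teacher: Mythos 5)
Your outline reproduces the paper's own strategy (Proposition~\ref{pro:1} for spanning, reduction to a fixed color-type, the projection $\pi_{\mathfrak{R}}$ attached to the smallest charge-type present, descent by removing the rightmost color-$1$ quasi-particle via $e_{\alpha_1}$ together with Proposition~\ref{pro:2}), but the one step that makes this descent legitimate is missing, and what you propose in its place would not work. The factorization (\ref{eq:23})--(\ref{eq:24}), which produces $b'$ and lets one cancel the invertible operator $1\otimes\cdots\otimes e_{\alpha_1}\otimes\cdots\otimes e_{\alpha_1}$, is available only when every monomial still in play ends in the quasi-particle $x_{n_{1,1}\alpha_1}(-n_{1,1})$, i.e.\ with its rightmost energy pinned at the extreme value allowed by (\ref{eq:10}); only then does the rightmost factor contribute $x_{\alpha_1}(-1)v_{L(\Lambda_0)}=-e_{\alpha_1}v_{L(\Lambda_0)}$ on each of the first $n_{1,1}$ tensor slots. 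In the paper this is arranged by the intermediate step (\ref{eq:28}): one acts repeatedly with $1\otimes\cdots\otimes A_{\theta}\otimes1\otimes\cdots\otimes1$ and rewrites through (\ref{eq:18}), (\ref{eq:21})--(\ref{eq:22}), each pass shifting the color-$1$ modes on that slot by one, so that the smallest monomial's last quasi-particle is brought exactly to mode $-n_{1,1}$ while every monomial whose last energy differs from it is annihilated by (\ref{eq:10}). This single mechanism performs both the separation of distinct energy sequences and the normalization needed for the $e_{\alpha_1}$ descent; it is precisely the substitute for the missing simple current operator. You instead assert $A_\theta=e_\theta$ --- which is false: $A_\theta=x_\theta(-1)$ is an intertwining operator commuting with $\mathcal{L}(\mathfrak{n}_+)$, whereas $e_\theta$ translates modes as in (\ref{eq:22}), and the two agree, up to sign, on $v_{L(\Lambda_0)}$ but not as operators --- and you delegate energy separation to a ``Vandermonde-type nonvanishing'' after multiplying generating functions by factors $(z_{p,i}-z_{p',i})$. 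That step is circular as stated: the coefficients of those generating functions are exactly the monomial vectors whose independence is in question, and the relations (\ref{eq:11})--(\ref{eq:13}) only rewrite monomials, i.e.\ give spanning, never independence. Your order of operations is also inverted: you perform the $e_{\alpha_1}$ descent first and treat energies afterwards, but the descent is not even defined before the energies are pinned.

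The base case has the same defect in miniature. Independence of the purely color-$2$ monomials --- stage (\ref{eq:30}) of the paper's proof --- is not ``immediate from the monochromatic theory of Subsection~\ref{sec:4} and relations (\ref{eq:11})--(\ref{eq:12})''; those relations produce the spanning set only. At this stage the paper identifies the vectors $b_a(\alpha_2)v_{L(k\Lambda_{0})}$ with elements of the tensor product $W_{L^A(3\Lambda_{0})}^{\otimes k}$ of (\ref{eq:31}), where $L^A(3\Lambda_{0})$ is a level-$3$ standard module for $\widetilde{sl}_{2}(\alpha_{2})$, applies the further projection $\pi'_{\mathfrak{R}^{-}}$ onto (\ref{eq:32}), and only then invokes Georgiev's linear-independence argument for $A_1^{(1)}$. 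This external input (or an explicit substitute for it) must be named; without it, and without the corrected $A_\theta$ step above, your induction does not close.
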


\begin{proof}
First consider a finite linear combination  
\begin{equation}\label{eq:25}
\sum_{a }
c_{a}b_av_{L(k\Lambda_{0})}=0
\end{equation}
of monomial vectors $b_av_{L(k\Lambda_{0})} \in  \mathfrak{B}_{W_{L(k\Lambda_0)}}$ of the same color-type $(r_{2}, r_{1})$. Denote by $b=b(\alpha_2)b(\alpha_1)x_{n_{1,1}\alpha_1}(j)$ the smallest monomial in (\ref{eq:25}) such that $c_{a}\neq 0$. Assume that $b$ is 
of charge-type  
\begin{equation}\label{eq:26} 
\mathfrak{R}'=\left(n_{r_{2}^{(1)},2}, \ldots , 
 n_{1,2};  n_{r_{2}^{(1)},1}, \ldots ,n_{1,1}\right)
\end{equation}
and a dual-charge-type 
$$\mathfrak{R}=\left( r_{2}^{(1)},\ldots , r_{2}^{(3k)};r_{1}^{(1)},\ldots , r_{1}^{(n_{1,1})}\right),
$$
which determines the projection $\pi_{\mathfrak{R}}$ on the vector space 
$${W_{L(\Lambda_{0})}}_{(\mu^{(k)}_{2}; 0)}\otimes \cdots  \otimes {W_{L(\Lambda_{0})}}_{(\mu^{(n_{1,1}+1)}_{2};0)}\otimes   
 {W_{L(\Lambda_{0})}}_{(\mu^{(n_{1,1})}_{2}; r_{1}^{(n_{1,1})})}\otimes  \cdots \otimes {W_{L(\Lambda_{0})}}_{(\mu^{(1)}_{2};r_{1}^{(1)})},$$
where 
$$\mu^{(t)}_{l}=r^{(3t-1)}_{2}+ r^{(3t)}_{2}+ r^{(3t-2)}_{2}, \ \ 1 \leq t \leq k.
$$
$\pi_{\mathfrak{R}}$ maps to zero all vectors $b_av_{L(k\Lambda_{0})}$ in (\ref{eq:25}) with monomials $b_a$ of larger charge-type's than $\mathfrak{R}'$. 
Now, in 
\begin{equation}\label{eq:27}
\sum_{a} c_a\pi_{\mathfrak{R}}b_{a}v_{L(k\Lambda_{0})}=0 
\end{equation}
we have a projection of $b_{a}v_{L(k\Lambda_{0})}$, where $b_{a}$ are of charge-type $\mathfrak{R}'$. 

On (\ref{eq:27}), we act with operators 
$1\otimes\cdots \otimes A_{\theta} \otimes \underbrace{1 \otimes \cdots \otimes 1}_{n_{1,1}-1 \ \text{factors}}$ and commute to the left with operators $1\otimes\cdots \otimes
e_{\theta} \otimes \underbrace{1 \otimes \cdots \otimes 1}_{n_{1,1}-1 \ \text{factors}}$  
until we get 
\begin{equation}\label{eq:28}%\label{LN3}
\sum_{a} c_a\pi_{\mathfrak{R}}b_a(\alpha_2)b_a(\alpha_1)x_{n_{1,1}\alpha_1}(-n_{1,1})v_{L(k\Lambda_{0})}=0. 
\end{equation}
Note, that in (\ref{eq:28}) we only have monomial vectors of charge-type $\mathfrak{R}'$ with quasi-particle $x_{n_{1,1}\alpha_1}(-n_{1,1})$, since operators used above at some point annihilate all other monomial vectors with $x_{n_{1,1}\alpha_1}(m_{1,1}), \ m_{1,1}> -j$.

From the consideration in previous subsection it follows that (\ref{eq:28}) can be written as
\begin{equation}\label{eq:29}
1\otimes\cdots \otimes e_{\alpha_{1}} \otimes \cdots \otimes e_{\alpha_{1}}\left(\sum_{a} c_a\pi_{\mathfrak{R^{-}}}b'_a(\alpha_2)b'_a(\alpha_1)v_{L(k\Lambda_{0})}\right)=0, 
\end{equation}
and after dropping out the invertible operator $1\otimes\cdots \otimes e_{\alpha_{1}} \otimes \cdots \otimes e_{\alpha_{1}}$ as
$$\sum_{a} c_a\pi_{\mathfrak{R^{-}}}b'_a(\alpha_2)b'_a(\alpha_1)v_{L(k\Lambda_{0})}=0
$$
where $b'_a(\alpha_2)b'_a(\alpha_1) \in B_{W_{L(k\Lambda_0)}}$ are quasi-particle monomials od dual-charge type
$$
\mathfrak{R}^{-}=\left( r_{2}^{(1)}, \ldots ,r_{2}^{(k)}; r_{1}^{(1)}-1, \ldots , r_{1}^{(n_{1,1})}-1\right),
$$
 with smaller charge-type from $\mathfrak{R}'$. 

We repeat the described processes, until we get 
\begin{equation}\label{eq:30}%\label{LN5}
\sum_{a} c_a\pi_{\mathfrak{R^{-}}}b_a(\alpha_2)v_{L(k\Lambda_{0})}=0,
\end{equation}
where monomial vectors $b_a(\alpha_2)v_{L(k\Lambda_{0})}$ are colored only with color $i=2$.

Similar as in the case of $B_2^{(1)}$ we will see vectors $b_a(\alpha_2)v_{L(k\Lambda_{0})}$ in (\ref{eq:30}) as elements of
\begin{equation}\label{eq:31} 
\underbrace{W_{L^A(3\Lambda_{0})}\otimes  \cdots  \otimes  W_{L^A(3\Lambda_{0})}}_{k \ \text{factors}},
\end{equation}
where $W_{L^A(3\Lambda_{0})}$ is the principal subspace of level 3 standard $\widetilde{sl}_{2}(\alpha_{2})$-module $L^A(3\Lambda_{0})$ with the highest weight vector $v_{L(\Lambda_0)}$. Denote by $\pi'_{\mathfrak{R^{-}}}$ the projection of (\ref{eq:31}) on 
\begin{equation}\label{eq:32} 
{W_{L^A(\Lambda_{0})}}_{(r^{(3k)}_{2})}\otimes {W_{L(\Lambda_{0})}}_{(r^{(3k-1)}_{2})}\otimes   
  \cdots \otimes {W_{L(\Lambda_{0})}}_{(r^{(2)}_{2})}\otimes {W_{L(\Lambda_{0})}}_{(r^{(1)}_{2})},
  \end{equation}
  where ${W_{L(\Lambda_{0})}}_{(r^{(t)}_{2})}$, $1 \leq t \leq 3k$ is a $\mathfrak{h}$-weighted subspace of $W_{L^A(3\Lambda_{0})}$ of weight $r^{(t)}_{2}\alpha_2$. From the condition (\ref{eq:7}), follows that monomial vectors
 \begin{equation}\label{eq:33}%\label{LN8}
\pi'_{\mathfrak{R^{-}}}\left(\pi_{\mathfrak{R^{-}}}b_a(\alpha_2)v_{L(k\Lambda_{0})}\right) 
 \end{equation} are elements of vector space (\ref{eq:32}). Now, using Georgiev's argument from \cite{G} follows $c_{a}=0$. 
\end{proof}

\section{Characters of principal subspaces}
\label{sec:9}
From Theorem~\ref{thm:1} we easily obtain the character of the principal subspace $W_{L(k\Lambda_{0})}$, 
\begin{align}\label{eq:34} 
 \text{ch} \ W_{L(k\Lambda_{0})}:=\sum_{m,r_1, r_2\geq 0} 
\text{dim} \ {W_{L(k\Lambda_{0})}}_{(m,r_1, r_2)}q^{m}y^{r_1}_{1}y^{r_2}_{2},
\end{align}
where ${W_{L(k\Lambda_{0})}}_{(m,r_1, r_2)}$ is a weight subspace spanned by monomial vectors of weight $-m$ and color-type $(r_1, r_2)$ (see \cite{Bu1}--\cite{Bu2}, \cite{G}).

If we write conditions on energies of quasi-particles of a basis $\mathfrak{B}_{W_{L(k\Lambda_0)}}$ in terms of the dual-charge-type (and the corresponding charge-type)
$$\left(r_{2}^{(1)},r_{2}^{(2)},\ldots ,r_{2}^{(3k)};  r_{1}^{(1)}, r_{1}^{(2)}, \ldots ,r_{1}^{(k)}\right): $$ 
\begin{align}\label{eq:35} 
\sum_{p=1}^{r^{(1)}_{2}}\sum_{q=1}^{r^{(1)}_1}\mathrm{min}\{n_{p,2},3n_{q,1}\}&=\sum_{s=1}^{k}r^{(s)}_1(r_{2}^{(2s)}+r_{2}^{(2s-1)}+r_{2}^{(2s-2)}),&\\
\label{eq:36} 
\sum_{p=1}^{r_{1}^{(1)}} (\sum_{p>p'>0}2\mathrm{min} \{ n_{p,1},
n_{p',1}\}+n_{p,1})&= \sum_{s=1}^{k}r^{(s)^{2}}_{1},\\
\label{eq:37} 
\sum_{p=1}^{r_{2}^{(1)}} (\sum_{p>p'>0}2\mathrm{min} \{ n_{p,2},
n_{p',2}\}+n_{p,2})&= \sum_{s=1}^{3k}r^{(s)^{2}}_{2},&
\end{align}
then, we have
\begin{theorem}
$$\mathrm{ch} \  W_{L(k\Lambda_{0})}
= \sum_{\substack{r^{(1)}_{1}\geq \ldots \geq r^{(k)}_{1}\geq 0\\ r^{(1)}_{2}\geq \ldots \geq r^{(3k)}_{2}\geq 0}}
\frac{q^{\sum_{s=1}^k r^{(s)^{2}}_{1}+\sum_{s=1}^{3k}r^{(s)^{2}}_{2}-\sum_{s=1}^k r^{(s)}_{1}(r^{(3s)}_{2}+r^{(3s-1)}_{2}+r^{(3s-2)}_{2})}}{(q)_{r^{(1)}_{1}-r^{(2)}_{1}}\cdots (q)_{r^{(k)}_{1}}(q)_{r^{(1)}_{2}-r^{(2)}_{2}}\cdots (q)_{r^{(3k)}_{2}}}y^{r_1}_{1}y^{r_2}_{2},
$$
where $(q)_0=1$, $(q)_r=(1-q)(1-q^2)\cdots (1-q^r)$ for $r > 0$, $r_1=\sum_{s=1}^{k}r_1^{(s)}$ and $r_2=\sum_{s=1}^{3k}r_2^{(s)}$.
$$\ \ \ \ \ \ \ \ \ \ \ \ \ \ \ \ \ \ \ \ \ \ \ \ \ \ \ \ \ \ \ \ \ \ \ \ \ \ \ \ \ \ \ \ \ \ \ \ \ \ \ \ \ \ \ \ \ \ \ \ \ \ \ \ \ \ \ \ \ \ \ \ \ \ \ \ \ \ \ \ \ \ \ \ \ \ \ \ \ \ \ \ \ \ \ \ \ \ \ \ \ \ \ \ \ \ \ \ \square$$
\end{theorem}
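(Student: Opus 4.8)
The plan is to evaluate the generating function (\ref{eq:34}) directly on the basis $\mathfrak{B}_{W_{L(k\Lambda_0)}}$ furnished by Theorem~\ref{thm:1}. For a monomial $b\in B_{W_{L(k\Lambda_0)}}$ as in (\ref{eq:17}), write $E(b)=-\sum_{i,p}m_{p,i}$ for its total energy, so that $q^{E(b)}y_1^{r_1}y_2^{r_2}$ is the contribution of $bv_{L(k\Lambda_0)}$ to (\ref{eq:34}). I would first group the sum according to the dual-charge-type
$$\mathfrak{R}=\left(r_2^{(1)}\geq\cdots\geq r_2^{(3k)}\geq 0;\,r_1^{(1)}\geq\cdots\geq r_1^{(k)}\geq 0\right).$$
Fixing $\mathfrak{R}$ fixes the color-type $(r_1,r_2)$, hence the factor $y_1^{r_1}y_2^{r_2}$, and it fixes the multiset of charges $n_{p,i}$, whose partition is conjugate to $\mathfrak{R}$. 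Thus
$$\mathrm{ch}\ W_{L(k\Lambda_0)}=\sum_{\mathfrak{R}}\left(\sum_{b\ \text{of type}\ \mathfrak{R}}q^{E(b)}\right)y_1^{r_1}y_2^{r_2},$$
and the task reduces to evaluating the inner energy sum for each fixed $\mathfrak{R}$.

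For the inner sum I would establish the factorization
$$\sum_{b\ \text{of type}\ \mathfrak{R}}q^{E(b)}=\frac{q^{E_{\min}(\mathfrak{R})}}{(q)_{r_1^{(1)}-r_1^{(2)}}\cdots(q)_{r_1^{(k)}}(q)_{r_2^{(1)}-r_2^{(2)}}\cdots(q)_{r_2^{(3k)}}},$$
following the argument of \cite{G} and \cite{Bu1}--\cite{Bu2}. The minimal energy $E_{\min}(\mathfrak{R})$ is attained by the unique monomial saturating all the upper bounds on the $m_{p,i}$ in the definition of $B_{W_{L(k\Lambda_0)}}$. The denominator is produced exactly as in the monochromatic case: after subtracting this minimal configuration, the residual freedom to decrease each $m_{p,i}$ respects only the strict difference-$2n_{p,i}$ conditions among quasi-particles of equal color and equal charge, and for each color $i$ and each charge value $j$ these moves amount to choosing a partition with at most $r_i^{(j)}-r_i^{(j+1)}$ parts (with $r_1^{(k+1)}=r_2^{(3k+1)}=0$), contributing one factor $1/(q)_{r_i^{(j)}-r_i^{(j+1)}}$.

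It then remains to match $E_{\min}(\mathfrak{R})$ with the numerator exponent. Summing the defining lower bounds on the energies over all quasi-particles, I would rewrite the color-$1$ and color-$2$ self-interaction contributions, by the combinatorial identities (\ref{eq:36}) and (\ref{eq:37}), as $\sum_{s=1}^{k}(r_1^{(s)})^2$ and $\sum_{s=1}^{3k}(r_2^{(s)})^2$, and the cross-color term $\sum_{p,q}\min\{3n_{q,1},n_{p,2}\}$, which enters the bound on the color-$2$ energies through relation (\ref{eq:13}), by (\ref{eq:35}), as $\sum_{s=1}^{k}r_1^{(s)}(r_2^{(3s)}+r_2^{(3s-1)}+r_2^{(3s-2)})$. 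Since this cross term relaxes the upper bounds on the color-$2$ energies it enters $E_{\min}(\mathfrak{R})$ with a minus sign, giving precisely the stated numerator exponent; summing over all $\mathfrak{R}$ then yields the formula.

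The step I expect to be the main obstacle is the factorization of the inner energy sum in the presence of the cross-color interaction. Because the lower bound on each color-$2$ energy $m_{p,2}$ depends, through $\sum_q\min\{3n_{q,1},n_{p,2}\}$, on the whole color-$1$ charge configuration, one must verify that the minimal-energy monomial is well defined and rigid and that, once subtracted, the energy-lowering moves genuinely decouple across colors, so that the interaction contributes only a shift of the numerator and leaves no cross-color factor in the denominator. This is exactly where the $G_2$ data (the coefficient $3$ and the $3k$ color-$2$ dual charges) differ from the $B_2^{(1)}$ computation of \cite{Bu1}, and confirming that the interaction is fully absorbed into the numerator is the heart of the argument.
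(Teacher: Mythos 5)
Your proposal is correct and takes essentially the same route as the paper: the paper likewise groups the basis monomials of Theorem~\ref{thm:1} by dual-charge-type, rewrites the minimal-energy exponents via the identities (\ref{eq:35})--(\ref{eq:37}), and obtains the factors $1/(q)_{r_i^{(j)}-r_i^{(j+1)}}$ from the standard Georgiev-style summation over the residual energy decreases of equal-charge, equal-color quasi-particles. The cross-color subtlety you flag resolves exactly as you describe, since $\sum_q\min\{3n_{q,1},n_{p,2}\}$ depends only on the charges fixed by $\mathfrak{R}$ and therefore contributes only a constant shift absorbed into the numerator.
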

At the end we state the theorem in which we describe the basis of the principal subspace $W_{N(k\Lambda_{0})}$:
\begin{theorem}\label{thm:2} 
The set $\mathfrak{B}_{W_{N(k\Lambda_{0})}}=\left\{bv_{N(k\Lambda_{0})}:b \in B_{W_{N (k\Lambda_{0})}}\right\}$, where
$$B_{W_{N(k\Lambda_{0})}}= \bigcup_{\substack{n_{r_{1}^{(1)},1}\leq \ldots \leq n_{1,1}\\\substack{n_{r_{2}^{(1)},2}\leq \ldots \leq n_{1,2}}}}\left(\text{or, equivalently,} \ \ \ 
\bigcup_{\substack{r_{1}^{(1)}\geq  r_{1}^{(2)}\geq \cdots\geq 0\\\substack{ r_{2}^{(1)}\geq r_{2}^{(2)}\geq \cdots\geq 
0}}}\right)
$$
$$\left\{b\right.= b(\alpha_{2}) b(\alpha_{1})
=x_{n_{r_{2}^{(1)},2}\alpha_{2}}(m_{r_{2}^{(1)},2})\cdots x_{n_{1,2}\alpha_{2}}(m_{1,2}) x_{n_{r_{1}^{(1)},1}\alpha_{1}}(m_{r_{1}^{(1)},1})\cdots  x_{n_{1,1}\alpha_{1}}(m_{1,1}):$$
$$
\left|
\begin{array}{l}
m_{p,1}\leq  -n_{p,1}- \sum_{p>p'>0} 2 \ \text{min}\{n_{p,1}, n_{p',1}\},  \ 1\leq  p\leq r_{1}^{(1)};\\
m_{p+1,1} \leq   m_{p,1}-2n_{p,1} \  \text{if} \ n_{p+1,1}=n_{p,1}, \ 1\leq  p\leq r_{1}^{(1)}-1;\\
m_{p,2}\leq  -n_{p,2} + \sum_{q=1}^{r_{1}^{(1)}}\text{min}\left\{ 3n_{q,1},n_{p,2 }\right\} - \sum_{p>p'>0} 2 \ \text{min}\{n_{p,2}, n_{p',2}\}, \  1\leq  p\leq r_{2}^{(1)};\\
m_{p+1,2}\leq  m_{p,2}-2n_{p,2} \  \text{if} \ n_{p,2}=n_{p+1,2}, \  1\leq  p\leq r_{2}^{(1)}-1  
\end{array}\right\}
$$
is a basis of the principal subspace $W_{N(k\Lambda_{0})}$.
$$\ \ \ \ \ \ \ \ \ \ \ \ \ \ \ \ \ \ \ \ \ \ \ \ \ \ \ \ \ \ \ \ \ \ \ \ \ \ \ \ \ \ \ \ \ \ \ \ \ \ \ \ \ \ \ \ \ \ \ \ \ \ \ \ \ \ \ \ \ \ \ \ \ \ \ \ \ \ \ \ \ \ \ \ \ \ \ \ \ \ \ \ \ \ \ \ \ \ \ \ \ \ \ \ \ \ \ \ \square$$
\end{theorem}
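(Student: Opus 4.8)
The plan is to establish Theorem~\ref{thm:2} in two parts, in parallel with the proof of Theorem~\ref{thm:1}: first that $\mathfrak{B}_{W_{N(k\Lambda_{0})}}$ spans $W_{N(k\Lambda_{0})}$, and then that it is linearly independent. The structural novelty compared to the standard-module case is that the integrability relations (\ref{eq:6}) and (\ref{eq:7}) no longer hold on $N(k\Lambda_{0})$; consequently the charges $n_{p,i}$ carry no upper bound, and the dual-charge-type sequences $r_{1}^{(1)}\geq r_{1}^{(2)}\geq\cdots$ and $r_{2}^{(1)}\geq r_{2}^{(2)}\geq\cdots$ become infinite with finite support. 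I would keep in reserve the isomorphism $f\colon U(\mathcal{L}(\mathfrak{n}_{+})_{<0})\to W_{N(k\Lambda_{0})}$: it shows that $W_{N(k\Lambda_{0})}$ is free, with the Poincar\'e--Birkhoff--Witt basis (\ref{eq:5}), so each graded subspace is finite-dimensional and a linearly independent spanning set is automatically a basis.

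For \textbf{spanning}, I would observe that the reduction relations used in Proposition~\ref{pro:1} --- the monochromatic relations (\ref{eq:11}) and (\ref{eq:12}), the polychromatic commutation (\ref{eq:13}), and Lemma~\ref{lem:1} --- are all consequences of the commutation relations (\ref{eq:3}) and the vertex-operator commutator (\ref{eq:4}) alone, and therefore hold verbatim on $N(k\Lambda_{0})$. Only (\ref{eq:6}) and (\ref{eq:7}) fail. Hence the same reduction procedure as in Proposition~\ref{pro:1} (first passing to the subspace $U$, then sorting by the monomial order) rewrites every $bv_{N(k\Lambda_{0})}$ as a finite combination of elements of $\mathfrak{B}_{W_{N(k\Lambda_{0})}}$, with the difference conditions now read off from (\ref{eq:11})--(\ref{eq:13}) with no charge cap.

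The \textbf{linear independence} is the main obstacle. Starting from a finite relation $\sum_{a}c_{a}b_{a}v_{N(k\Lambda_{0})}=0$, I would first restrict to a single color-type $(r_{2},r_{1})$, as this grading separates. The quotient map $N(k\Lambda_{0})\twoheadrightarrow L(k\Lambda_{0})$ induces an $\mathcal{L}(\mathfrak{n}_{+})$-homomorphism $\pi\colon W_{N(k\Lambda_{0})}\to W_{L(k\Lambda_{0})}$ sending $bv_{N(k\Lambda_{0})}\mapsto bv_{L(k\Lambda_{0})}$. On those $b_{a}$ whose charges respect the level-$k$ caps ($n_{1,1}\leq k$, $n_{1,2}\leq 3k$), $\pi$ is injective onto a subfamily of $\mathfrak{B}_{W_{L(k\Lambda_{0})}}$, which is independent by Theorem~\ref{thm:1}; so those coefficients vanish. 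The difficulty is precisely the monomials with some charge exceeding the cap: since there is no module map $N(k\Lambda_{0})\to L(K\Lambda_{0})$ for $K>k$, no projection onto a standard module can detect them, and the pure projection argument of Theorem~\ref{thm:1} does not close up. I would handle these by an induction on the total excess charge, analysing $\ker\pi$ as the $\mathcal{L}(\mathfrak{n}_{+})$-submodule generated by the over-cap quasi-particles $x_{(k+1)\alpha_{1}}(m)$ and $x_{(3k+1)\alpha_{2}}(m)$, $m\in\mathbb{Z}$, and showing by the same charge-type induction and shift operators of Sections~\ref{sec:6}--\ref{sec:7} that $\ker\pi$ in turn carries a quasi-particle basis indexed by the remaining monomials of $B_{W_{N(k\Lambda_{0})}}$. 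Combined with spanning, this yields the basis.

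Finally, once the basis is in hand, equating its character --- the right-hand side of (\ref{eq:1}), computed from the difference conditions exactly as in (\ref{eq:35})--(\ref{eq:37}) --- with the product-side character coming from the free description (\ref{eq:5}) of $W_{N(k\Lambda_{0})}$ produces the identity (\ref{eq:1}) of Theorem~\ref{tm1}. The hard part throughout is the unbounded-charge layer: controlling the over-cap quasi-particles intrinsically, without the crutch of a standard module of matching level, is where the argument genuinely departs from the proof of Theorem~\ref{thm:1}.
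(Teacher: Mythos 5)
Your spanning argument is fine and agrees with the intended one: relations (\ref{eq:11})--(\ref{eq:13}) and Lemma \ref{lem:1} rest only on (\ref{eq:3}) and (\ref{eq:4}), so the reduction of Proposition \ref{pro:1} goes through on $N(k\Lambda_{0})$ with the charge caps removed. The genuine gap is in the linear independence, exactly at the point you yourself flag as ``the hard part.'' Your premise that, because there is no $\widehat{\mathfrak{g}}$-module map $N(k\Lambda_{0})\to L(K\Lambda_{0})$ for $K>k$, ``no projection onto a standard module can detect'' the over-cap monomials, is wrong, and rejecting that route is what pushes you into the unsupported $\ker\pi$ analysis. The problem lives entirely in the category of $\mathcal{L}(\mathfrak{n}_{+})$-modules, and as an $\mathcal{L}(\mathfrak{n}_{+})$-module $W_{N(k\Lambda_{0})}$ does not depend on $k$: since $\langle x,y\rangle=0$ for $x,y\in\mathfrak{n}_{+}$, the bracket (\ref{eq:3}) has no central term on $\mathcal{L}(\mathfrak{n}_{+})$, and the isomorphism $f$ you keep ``in reserve'' gives $W_{N(k\Lambda_{0})}\cong U(\mathcal{L}(\mathfrak{n}_{+}))\big/U(\mathcal{L}(\mathfrak{n}_{+}))\mathcal{L}(\mathfrak{n}_{+})_{\geq 0}\cong W_{N(K\Lambda_{0})}$ for every $K$, via $bv_{N(k\Lambda_{0})}\mapsto bv_{N(K\Lambda_{0})}$. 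So, given a finite relation $\sum_{a}c_{a}b_{a}v_{N(k\Lambda_{0})}=0$, choose $K$ so large that every quasi-particle occurring has charge at most $K$ (color $1$), resp.\ $3K$ (color $2$); transport the relation to $W_{N(K\Lambda_{0})}$ and compose with the surjection $W_{N(K\Lambda_{0})}\twoheadrightarrow W_{L(K\Lambda_{0})}$ induced by the quotient map. Because the difference conditions defining $B_{W_{N(k\Lambda_{0})}}$ involve no $k$, every $b_{a}$ then lies in $B_{W_{L(K\Lambda_{0})}}$, and Theorem \ref{thm:1} applied at level $K$ gives $c_{a}=0$ for all $a$ at once --- no splitting into in-cap and over-cap monomials is needed. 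This level-raising argument is the proof the paper intends (it is the argument of \cite{Bu1}, to which the paper defers).

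By contrast, the replacement you propose has no supporting mechanism. The tools of Sections \ref{sec:6}--\ref{sec:7} (the projections $\pi_{\mathfrak{R}}$ and the operators $A_{\theta}$, $e_{\theta}$, $e_{\alpha_{1}}$) are defined on the realization $W_{L(k\Lambda_{0})}\subset L(\Lambda_{0})^{\otimes k}$, where $x_{\alpha_{1}}(z)^{k+1}=0$ and $x_{\alpha_{2}}(z)^{3k+1}=0$; every over-cap quasi-particle acts as zero there, so this machinery is blind to the over-cap layer by construction --- which is precisely why the level must be raised. Moreover, the assertion that $\ker\pi$ ``carries a quasi-particle basis indexed by the remaining monomials of $B_{W_{N(k\Lambda_{0})}}$'' is essentially the statement being proved (spanning of the kernel by, and independence of, the over-cap monomial vectors), so as written the step is circular: identifying $\ker\pi$ with the span of the over-cap monomials would itself require either the basis theorem or an independent description of the intersection of the maximal submodule of $N(k\Lambda_{0})$ with $W_{N(k\Lambda_{0})}$, neither of which you supply. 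Your closing remark, that the theorem together with the PBW basis (\ref{eq:5}) yields the identity of Theorem \ref{tm1}, is correct and matches the paper.
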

The proof of the Theorem \ref{thm:2} is similar as in the case of $W_{L(k\Lambda_{0})}$ (see \cite{Bu1}), from which we can as before obtain the character of $W_{N(k\Lambda_{0})}$:
\begin{theorem}
$$\mathrm{ch} \  W_{N(k\Lambda_{0})}
= \sum_{\substack{r^{(1)}_{1}\geq \ldots \geq r^{(u)}_{1}\geq  0\\ r^{(1)}_{2}\geq \ldots \geq r^{(3v)}_{2}\geq  0\\ u,v \geq 0}}
\frac{q^{\sum_{s=1}^u r^{(s)^{2}}_{1}+\sum_{s=1}^{3v}r^{(s)^{2}}_{2}-\sum_{s\geq 1} r^{(s)}_{1}(r^{(3s)}_{2}+r^{(3s-1)}_{2}+r^{(3s-2)}_{2})}}{(q)_{r^{(1)}_{1}-r^{(2)}_{1}}\cdots (q)_{r^{(u)}_{1}}(q)_{r^{(1)}_{2}-r^{(2)}_{2}}\cdots (q)_{r^{(3v)}_{2}}}y^{r_1}_{1}y^{r_2}_{2},
$$
where $r_1=\sum_{s=1}^{u}r_1^{(s)}$ and $r_2=\sum_{s=1}^{3v}r_2^{(s)}$.
$$\ \ \ \ \ \ \ \ \ \ \ \ \ \ \ \ \ \ \ \ \ \ \ \ \ \ \ \ \ \ \ \ \ \ \ \ \ \ \ \ \ \ \ \ \ \ \ \ \ \ \ \ \ \ \ \ \ \ \ \ \ \ \ \ \ \ \ \ \ \ \ \ \ \ \ \ \ \ \ \ \ \ \ \ \ \ \ \ \ \ \ \ \ \ \ \ \ \ \ \ \ \ \ \ \ \ \ \ \square$$
\end{theorem}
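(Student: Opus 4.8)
The plan is to read the character off directly from the basis $\mathfrak{B}_{W_{N(k\Lambda_{0})}}$ provided by Theorem~\ref{thm:2}, following the same bookkeeping used for $W_{L(k\Lambda_{0})}$ in Section~\ref{sec:9}; the only structural difference is that the charges are no longer bounded by $k$ and $3k$, so the dual-charge-types now range over all pairs of finitely supported weakly decreasing sequences of non-negative integers, which is exactly what produces the extra outer summation over $u,v\geq 0$. Since $\mathfrak{B}_{W_{N(k\Lambda_{0})}}$ is a basis, each monomial $b=b(\alpha_2)b(\alpha_1)$ with energies $m_{p,i}$ and color-type $(r_1,r_2)$ contributes $q^{-\sum_{p,i}m_{p,i}}y_1^{r_1}y_2^{r_2}$, and the first step is to organize this sum according to the dual-charge-type $\mathfrak{R}=(r_2^{(1)},r_2^{(2)},\ldots;r_1^{(1)},r_1^{(2)},\ldots)$ of $b$. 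For fixed $\mathfrak{R}$ the color-type $(r_1,r_2)=(\sum_s r_1^{(s)},\sum_s r_2^{(s)})$ is determined, accounting for $y_1^{r_1}y_2^{r_2}$, so what remains is to evaluate the $q$-weight of all admissible energy sequences obeying the difference conditions of Theorem~\ref{thm:2}.

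Then I would isolate, for each fixed $\mathfrak{R}$, the configuration in which all the inequalities of Theorem~\ref{thm:2} are saturated; this is the unique minimal-degree ($=$ maximal $m_{p,i}$) monomial of that dual-charge-type, and summing its energies gives the numerator exponent. This is precisely where the identities (\ref{eq:35})--(\ref{eq:37}) are used: (\ref{eq:36}) and (\ref{eq:37}) rewrite the intra-color self-interactions $\sum_p(\sum_{p>p'}2\min\{n_{p,i},n_{p',i}\}+n_{p,i})$ as $\sum_s(r_1^{(s)})^2$ and $\sum_s(r_2^{(s)})^2$, while (\ref{eq:35}) rewrites the cross-color interaction $\sum_{p,q}\min\{n_{p,2},3n_{q,1}\}$ as $\sum_s r_1^{(s)}(r_2^{(3s)}+r_2^{(3s-1)}+r_2^{(3s-2)})$; since the latter enters the bound on $m_{p,2}$ with the opposite sign to the self-interaction terms, summation produces exactly
$$
q^{\sum_{s}(r_1^{(s)})^{2}+\sum_{s}(r_2^{(s)})^{2}-\sum_{s}r_1^{(s)}(r_2^{(3s)}+r_2^{(3s-1)}+r_2^{(3s-2)})}.
$$

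Next I would sum over all excitations above this minimal configuration. Working color by color, within each block of quasi-particles of a fixed color $i$ and fixed charge $t$ the energies can be raised independently subject only to the difference-$2t$ spacing, while the remaining inter-charge and inter-color interaction bounds only translate whole blocks rigidly and thus contribute nothing beyond the minimal energy already counted. After subtracting the minimal energies, the residual freedom for the $r_i^{(t)}-r_i^{(t+1)}$ quasi-particles of color $i$ and charge exactly $t$ is counted by $1/(q)_{r_i^{(t)}-r_i^{(t+1)}}$, exactly as in Georgiev~\cite{G} and in \cite{Bu1}. Taking the product over all charge levels of both colors yields the denominator $(q)_{r_1^{(1)}-r_1^{(2)}}\cdots(q)_{r_2^{(1)}-r_2^{(2)}}\cdots$, and summing over all admissible $\mathfrak{R}$ gives the asserted identity.

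The hard part will be the decoupling of the excitation sum, that is, verifying that both the intra-color difference conditions across distinct charges and the inter-color interaction term contribute only to the already computed minimal energy and leave the residual freedom split into independent blocks, so that the denominator genuinely factors into the displayed product of $q$-Pochhammer symbols; once this is established the numerator bookkeeping via (\ref{eq:35})--(\ref{eq:37}) is routine. I would also record that, because the charges are unbounded, for each fixed color-type only finitely many dual-charge-types occur and only finitely many monomials of each degree arise, so the rearrangement of the formal series is legitimate and the coefficients of $\mathrm{ch}\ W_{N(k\Lambda_{0})}$ are well defined.
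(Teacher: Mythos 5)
Your proposal is correct and follows essentially the same route as the paper: the paper also obtains $\mathrm{ch}\,W_{N(k\Lambda_{0})}$ directly from the quasi-particle basis of Theorem~\ref{thm:2}, repeating the bookkeeping of Section~\ref{sec:9} for $W_{L(k\Lambda_{0})}$ --- dual-charge-type decomposition, the conjugate-partition identities (\ref{eq:35})--(\ref{eq:37}) for the minimal (saturated) energies in the numerator, and the $q$-Pochhammer factors $1/(q)_{r_i^{(t)}-r_i^{(t+1)}}$ from the excitation counting --- with the only change being that the absence of the integrability relations (\ref{eq:6})--(\ref{eq:7}) removes the bounds $k$ and $3k$ on charges, yielding the sum over all $u,v\geq 0$. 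The ``decoupling'' step you flag as the hard part is exactly the standard Georgiev argument already invoked for $W_{L(k\Lambda_{0})}$, so your write-up supplies in detail what the paper compresses into ``as before.''
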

From (\ref{eq:5}) and previous theorem follows a generalization of Euler-Cauchy theorem (cf. (2.2.8) and (2.2.9) in \cite{A1} and (4.1) in \cite{A2}):
\begin{theorem}\label{t5}
$$\prod_{m > 0} \frac{1}{(1-q^my_1)}\frac{1}{(1-q^my_2)}\frac{1}{(1-q^my_1y_2)}\frac{1}{(1-q^my_1y_2^2)}\frac{1}{(1-q^my_1y_2^3)}\frac{1}{(1-q^my_1^2y_2^3)}$$
\begin{equation}\label{eq:38}= \sum_{\substack{r^{(1)}_{1}\geq r^{(2)}_{1}\geq r^{(3)}_{1} \geq \ldots  \geq  0\\ r^{(1)}_{2}\geq r^{(2)}_{2} \geq r^{(3)}_{2}\geq \ldots  \geq 0}}
\frac{q^{\sum_{s\geq 1}  r^{(s)^{2}}_{1}+\sum_{s\geq 1} r^{(s)^{2}}_{2}-\sum_{s\geq 1} r^{(s)}_{1}(r^{(3s)}_{2}+r^{(3s-1)}_{2}+r^{(3s-2)}_{2})}}{(q)_{r^{(1)}_{1}-r^{(2)}_{1}}(q)_{r^{(2)}_{1}-r^{(3)}_{1}}\cdots  (q)_{r^{(1)}_{2}-r^{(2)}_{2}}(q)_{r^{(2)}_{2}-r^{(3)}_{2}}\cdots }y^{r_1}_{1}y^{r_2}_{2},
\end{equation}
where $r_1=\sum_{s\geq 1} r_1^{(s)}$ and $r_2=\sum_{s\geq 1} r_2^{(s)}$.
The sum on the right side of (\ref{eq:38}) is over all descending infinite sequences of non-negative integers with finite support.
$$\ \ \ \ \ \ \ \ \ \ \ \ \ \ \ \ \ \ \ \ \ \ \ \ \ \ \ \ \ \ \ \ \ \ \ \ \ \ \ \ \ \ \ \ \ \ \ \ \ \ \ \ \ \ \ \ \ \ \ \ \ \ \ \ \ \ \ \ \ \ \ \ \ \ \ \ \ \ \ \ \ \ \ \ \ \ \ \ \ \ \ \ \ \ \ \ \ \ \ \ \ \ \ \ \ \ \ \ \square$$
\end{theorem}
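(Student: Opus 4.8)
The plan is to prove (\ref{eq:38}) by computing the graded dimension $\mathrm{ch}\ W_{N(k\Lambda_{0})}$ in two different ways: the left-hand side of (\ref{eq:38}) will emerge from the Poincar\'e--Birkhoff--Witt basis (\ref{eq:5}) of $W_{N(k\Lambda_{0})}$, while the right-hand side is precisely the quasi-particle (fermionic) character furnished by Theorem~\ref{thm:2} and the character formula for $W_{N(k\Lambda_{0})}$ established just above. Since both expressions compute the same series, equating them gives the identity. Note that both sides are manifestly independent of $k$, consistent with the fact that $f$ identifies $W_{N(k\Lambda_{0})}$ with $U(\mathcal{L}(\mathfrak{n}_{+})_{<0})$ for every $k$.

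First I would produce the product side. The map $f\colon U(\mathcal{L}(\mathfrak{n}_{+})_{<0})\to W_{N(k\Lambda_{0})}$ is an isomorphism, so by the PBW theorem the monomials (\ref{eq:5}) form a basis, and $W_{N(k\Lambda_{0})}$ is freely generated, root by root, by the families $\{x_{\alpha}(-m):m\geq 1\}$ for the six positive roots. For a fixed root $\alpha$ these generators commute: each root vector is isotropic, $\langle x_{\alpha},x_{\alpha}\rangle=0$, so (\ref{eq:3}) gives $[x_{\alpha}(m),x_{\alpha}(n)]=0$, and the corresponding contribution to the character is the \emph{bosonic} factor $\prod_{m>0}(1-q^{m}y^{\alpha})^{-1}$, where a single $x_{\alpha}(-m)$ carries weight $q^{m}y_{1}^{a}y_{2}^{b}$ for $\alpha=a\alpha_{1}+b\alpha_{2}$. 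Computing the six weight monomials $\alpha_{1},\alpha_{2},\alpha_{1}+\alpha_{2},\alpha_{1}+2\alpha_{2},\alpha_{1}+3\alpha_{2},2\alpha_{1}+3\alpha_{2}\mapsto y_{1},y_{2},y_{1}y_{2},y_{1}y_{2}^{2},y_{1}y_{2}^{3},y_{1}^{2}y_{2}^{3}$ and multiplying the six bosonic pieces yields exactly the product on the left of (\ref{eq:38}).

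Second I would invoke the fermionic character above: the same graded dimension equals the quasi-particle sum over $u,v\geq 0$ with $r^{(1)}_{1}\geq\cdots\geq r^{(u)}_{1}\geq 0$ and $r^{(1)}_{2}\geq\cdots\geq r^{(3v)}_{2}\geq 0$. Matching this with the right-hand side of (\ref{eq:38}) is a purely notational reindexing: summing over such finite data is the same as summing over all weakly decreasing infinite sequences of non-negative integers with finite support, since each extra trailing zero only inserts a factor $(q)_{0}=1$ into the denominator and leaves the numerator unchanged. Equating the two expressions for $\mathrm{ch}\ W_{N(k\Lambda_{0})}$ then proves (\ref{eq:38}).

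The step requiring the most care is the bosonic computation in the second paragraph, namely verifying the weight bookkeeping for all six positive roots and, above all, that the generators for each fixed root genuinely commute. This isotropy is exactly what makes each root contribute a geometric factor $\prod_{m>0}(1-q^{m}y^{\alpha})^{-1}$ rather than a fermionic one, and it is the crux of why (\ref{eq:38}) is a genuine two-colour, six-factor generalization of the classical Euler--Cauchy identity. The reindexing and the final equality are then routine.
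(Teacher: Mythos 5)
Your proposal is correct and follows essentially the same route as the paper, which deduces the identity by equating the bosonic character of $W_{N(k\Lambda_{0})}$ coming from the Poincar\'e--Birkhoff--Witt basis (\ref{eq:5}) (one geometric factor $\prod_{m>0}(1-q^{m}y^{\alpha})^{-1}$ per positive root of $G_2$) with the fermionic character obtained from the quasi-particle basis of Theorem~\ref{thm:2}. Your extra verification that $[x_{\alpha}(m),x_{\alpha}(n)]=0$ is correct but not strictly needed, since the PBW theorem already yields the multiset counting behind the product side.
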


\section*{Acknowledgement}
I am very grateful to Mirko Primc for his help and valuable suggestions during the preparation of this work. I would like to thank to Dra\v zen Adamovi\' c for supporting my research. I also thank the referee for pointing out that Theorem \ref{t5} (Theorem \ref{tm1} in the Introduction) is not of Rogers-Ramanujan type, but rather a generalization of Euler-Cauchy identity.

\end{document}